\newtheorem{theorem}{Theorem}[section]
\newtheorem{lemma}[theorem]{Lemma}
\theoremstyle{definition}
\newtheorem{definition}[theorem]{Definition}
\newtheorem{example}[theorem]{Example}
\theoremstyle{remark}
\newtheorem{remark}[theorem]{Remark}
\newcommand{\A}{\mathbb{A}}
\newcommand{\p}{\mathbb{P}}
\newcommand{\Z}{\mathbb{Z}}
\newcommand{\Q}{\mathbb{Q}}
\newcommand{\R}{\mathbb{R}}
\newcommand{\Id}{\mathrm{Ideal}}
\newcommand{\Sing}{\mathrm{Sing}}
\newcommand{\ord}{\mathrm{ord}}
\newcommand{\Cl}{\mathrm{Closure}}
\newcommand{\Tr}{\mathrm{Transform}}
\newcommand{\Tf}{\mathrm{Tightify}}
\begin{document}

\title{A Simplified Game for Resolution of Singularities}

%    Information for first author
\author{Josef Schicho}
%    Address of record for the research reported here
%\address{Johann Radon Institute, Austrian Academy of Sciences, Linz, Austria}
%    Current address
%\curraddr{Johann Radon Institute, Austrian Academy of Sciences, Linz, Austria}
%\email{josef.schicho@oeaw.ac.at}
%    \thanks will become a 1st page footnote.
%\thanks{This research was supported in part by the Austrian Science Fund (FWF): P21461.}

%    General info
%\subjclass{Primary 14B05, 14E15; Secondary 12D10}
%\date{November 18, 2012.}

%\dedicatory{This paper is dedicated to our advisors.}

%\keywords{Resolution of singularities, characteristic zero}

\maketitle

\begin{abstract}
We will describe a combinatorial game that models the problem of resolution of singularities
of algebraic varieties over a field of characteristic zero. By giving a winning strategy for
this game, we give another proof of the existence of resolution. 
\end{abstract}

\section*{Introduction}

The proof of existence and construction of resolution of singularities of algebraic varieties in characteristic zero
can be divided into two parts. First, there is an algebraic part, providing necessary
constructions such as blowups, differential closure,
transforms along blowup, descent in dimension, transversality conditions, and properties
of these constructions. Second, there is a combinatorial part that consists in the setup
of a tricky form of double induction taking various side conditions into account. The combinatorial
part can be formulated as a game. The two parts of the proof
can be cleanly separated: once the properties of the algebraic constructions
are clear, it is no more necessary to do any algebra in the induction proof. In \cite{Schicho:12d},
the algebraic parts and the combinatoric parts of the proof are in separate sections that
are logically independent of each other. The formulation is based on Villamayor's
constructive proof \cite{Villamayor:89,Villamayor_Encinas:97b,Villamayor:91}, using ideas from other proofs
\cite{Encinas_Hauser:02,Schicho:99f,Wlodarczyk:07,Kollar:07,Blanco_Encinas:11}. It is needless to say that there are many
more proofs that indirectly influenced our formulation. We just mention
\cite{Hironaka:64,Bierstone_Milman:97}. For a more exhaustive account of
other proofs, see \cite{Schicho:12d} and the references cited therein.

In this paper, we give a simplified version of the game described in \cite{Schicho:12d}.
In contrast to the description there, the combinatorial part is not entirely independent of
the algebraic part. In the game there, there are two players, one who tries to resolve
and one who provides combinatorial data for the singularities according to a set of rules 
(and who is destined to lose). In this paper, the second player is replaced by an algebraic oracle
that has complete information on the singularity, so that the rules are not needed. Also, the
combinatorial data has been reduced: the stratification of the singular locus is not used
any more. 

The game described here has been introduced at the Clay Summer School in Obergurgl, 2012 on
Resolution of Singularities. For me,
this event was a unique experience full of intensive interactions with many highly qualified
young researchers. Several simplifications are owed to the participants,
for instance the precise notion of the equivalence relation. 
The simplified game has actually been played at the summer school; see section \ref{sec:play}
for a description of this play. 

As a consequence of the winning strategy for the second player, we get another formulation
of the resolution algorithm. A new aspect of this formulation is that it does not depend on local
choices. All substeps in the algorithm can be done globally or locally, whichever is more useful.
There is a single substep which requires an intermediate passing to a local cover, but the result
of this substep is again global (see Remark~\ref{rem:global}).

Most proofs in Section~\ref{sec:alg} have been done as exercises in the Clay Summer School; we give
them here (mostly through references) just for the sake of completeness. Apart from these,
the existence proof of resolution in this paper is self-contained, and we hope that it serves as a gentle
introduction/explanation of this classical result.

This version of the paper has been read by four reviewers, and I would like to thank them for their truly formidable reviews. 
They contained in total 116 suggestions,
some of them independently by several reviewers, 9 additional references to the literature, on 17 pages in total. 
There was not a single comment which was not clear. I tried to follow most of their suggestions; many
remarks and examples, for instance Example~\ref{ex:moeb} of a singularity for which there is no global
descent, are only here because of their persistence.

\section{Habitats, Singularities, and Gallimaufries} \label{sec:alg}

In this section, we introduce the algebraic concepts which are needed for our setup of the
resolution problem and algorithm: habitats, singularities, transform along blowups, 
differential closure, gallimaufries and descents. The terminology used in this paper
is the same as in \cite{Schicho:12d}.

Let $K$ be a field of characteristic zero.

\begin{definition}
A {\em habitat} over $K$ is an equidimensional nonsingular algebraic variety $W$ over $K$
together with a finite sequence of nonsingular hypersurfaces $(E_1,\dots,E_r)$ such that
no two have a common component, and such that their sum is a normal crossing divisor.
We denote this habitat by $(W,(E_1,\dots,E_r))$, and when the hypersurface sequence is
not important, we denote the habitat by $(W,\ast)$.
\end{definition}

Examples of Habitats are the affine spaces $\A^n$, $n\ge 0$, with divisors defined by coordinates
$x_i$, $1\le i\le n$. In the analytic category, every habitat is locally isomorphic to such a habitat.
It is also possible that some of the hypersurfaces are empty. This is a necessity
because we consider habitats as local/global objects where we would like to restrict to open sets,
or glue together habitats on an open cover when the restrictions to the intersections coincide.
If we restrict to an open subset of the complement of $E_i$, $1\le i\le r$, then the restricted
habitat has an empty hypersurface at the $i$-th place.

\begin{definition}
A subvariety $Z$ of a habitat as above is called {\em straight} iff
it is pure-dimensional, and for every point $p\in Z$, there is a regular system of parameters $u$
such that $Z$ is locally defined by a subset of $u$ and every hypersurface of the habitat sequence
that contains $p$ is defined by an element of $u$. This concept also arises in \cite{Schicho:12d},
where it is called {\em transversal}, and in \cite{Villamayor:91}, where it is called {\em normal crossing}.
\end{definition}

For instance, in the habitat $(\A^n,(x_1,\dots,x_n))$, the variety defined by $(x_1$, $\dots$, $x_m)$, $m\le n$,
is straight. If the habitat sequence is empty, then straightness is equivalent to smoothness.

\begin{definition}
Let $Z$ be a straight subvariety of a habitat $(W,(E_1,\dots,E_n))$.
The {\em blowup} along $Z$ is the habitat $(W',(E_1',\dots,E_n',E_{n+1}))$, where $W'$ is the
blowup of $W$ along $Z$, $E_i'$ is the strict transform of $E_i$ for $i=1,\dots,n$,
and $E_{n+1}$ is the exceptional divisor introduced by the blowup (the inverse image of the
center $Z$).
\end{definition}

We allow the following degenerate cases: if $Z=E_1$, then the blowup is $(W,(\emptyset,E_2,\dots,E_n,E_1))$.
If $Z=W$, then the blowup is $(\emptyset,(\emptyset,\dots,\emptyset))$.

\begin{definition}
For any habitat $(W,\ast)$, we define an operator $\Delta$ from the set of ideal sheaves on $W$ to itself, 
as follows. 
For $I\subset \mathcal{O}_W$ and affine open subset $U\subset W$, $\Delta(I)|_U$ 
is the ideal sheaf generated 
by $I|_U$ and all first order partial derivatives of elements in $I|_U$.

The $i$-th iteration of the operator $\Delta$ is denoted by $\Delta^i$.
\end{definition}

\begin{definition}
A {\em singularity} on a habitat $(W,\ast)$ is a finitely generated sheaf of Rees algebras
$A=\oplus_{i=0}^\infty A_i$ over $A_0=\mathcal{O}_W$, i.e. a
sequence of ideal sheafs $A_i\subset\mathcal{O}_W$
such that $A_0=\mathcal{O}_W$ and $A_i\cdot A_j\subseteq A_{i+j}$ and equality holds for sufficiently
large indices $i,j$ (this is equivalent to finite generation).

We say a singularity $A$ is of {\em ideal-type} if there is an integer $b>0$ and ideal sheaf $I$ 
such that $A_{nb}=I^n$
for all indices which are multiples of $b$, and $A_i=(0)$ otherwise. These singularities
are denoted by $(I,b)$. (This is Hironaka's notion of pairs.)

The {\em singular locus} $\Sing(A)$ of a singularity $A=\oplus_{i=0}^\infty A_i$ is the
intersection of the zero sets of $\Delta^{i-1}(A_i), i>0$. We say that a singularity
is {\em resolved} if its singular locus is empty.
\end{definition}

\begin{remark}
The above concept of singularity is based on Hironaka's definition \cite{Hironaka:77} of
idealistic exponents (ideal-type singularities). I learned the description of singularities in terms
by Rees algebras from \cite{Encinas_Villamayor:03}, which is based on \cite{Villamayor:08}.
Similar description by algebras or filtration
of rings have been used systematically in \cite{Hironaka:05,Kawanoue:07,Kawanoue:10}. Note that
our definition of Rees algebras slightly differs from the definition in \cite{Swanson_Huneke:06}.
\end{remark}

Note that the intersection defining the singular locus is a finite intersection by N\"otherianity.
For computing the singular locus, it suffices to consider generating degrees.

The singular locus of an ideal-type singularity of the form $(I,1)$ is just the zero set of $I$.
The singular locus of an ideal-type singularity of the form $(I,b)$ with $b>1$ is the zero set of points
where the order of $I$ is at least $b$.

The trivial singularities are the zero singularity $A_i=0$ for $i>0$ and the unit singularity
$A_i=\mathcal{O}_W$ for all $i\ge 0$. The singular locus of the zero singularity is $W$, and
the singular locus of the unit singularity is the empty set (and so, the unit singularity is resolved).

\begin{definition}
Let $Z$ be a straight subvariety in the singular set of a singularity $A$. The {\em transform} of $A$
is the singularity $A'=\oplus_{i=0}^\infty A'_i$ on the blowup $(W,(¸\ast,E_{n+1}))$, where $A'_i$ 
is such that $f^\ast(A_i)=A_i\mathcal{O}_{W'}=\Id(E_{n+1})^i\cdot A'_i$ for $i>0$.
Recall that $E_{n+1}=f^{-1}(Z)$ is the exceptional divisor.
\end{definition}

\begin{example} \label{ex:blow}
We consider the ideal-type singularity $(\langle x^2-y^3\rangle,2)$ in the habitat $(\A^2,())$.
The singular locus of $A$ is the only point where $x^2-y^3$ has order~2, namely $(0,0)$.
The blowup of $\A^2$ can be covered by two open affine charts, with coordinates $(x,\tilde{y}=\frac{y}{x})$
and $(\tilde{x}=\frac{x}{y},y)$, respectively. In the first chart, the transform is the
ideal-type singularity $(\langle 1-x\tilde{y}^3\rangle,2)$; in the second chart, it is the ideal-type singularity
$(\langle\tilde{x}^2-y\rangle,2)$. 
Note that in both charts the singularity is resolved.
\end{example}

\begin{definition}
A {\em thread} is a sequence of singularity-habitat pairs, where the next
is the transform of the previous under blowup of a straight subvariety in the singular locus.
If the last singularity has empty singular locus, then we say the thread is a {\em resolution} of the
first singularity of the thread.
\end{definition}

For instance, the transform in the example above has empty singular locus. Therefore the
thread consisting of the single blowup above is a resolution of the singularity $(\langle x^2-y^3\rangle,2)$.

The objective in this paper is to show that every singularity admits a resolution.
Desingularization of algebraic varieties over characteristic zero is then a consequence.

\begin{theorem} \label{thm:reduce}
Assume that every singularity over $K$ has a resolution. Then every irreducible variety $X$ over $K$ that
can be embedded in a nonsingular ambient space has a
desingularization, i.e. a proper birational map from a nonsingular variety to $X$.
\end{theorem}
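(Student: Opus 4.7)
The plan is to reduce desingularization of $X$ to the assumed resolution statement by a double induction on the pair $(\dim W, b)$ in lexicographic order, where $b := \max_{p \in W} \ord_p(I_X)$ and $I_X$ is the ideal sheaf of $X$ in $W$. The outer base case $\dim W = 0$ is immediate, since then $X$ is a disjoint union of reduced points. The inductive step splits naturally into an order-lowering step (when $b \ge 2$) and a dimension-descent step (when $b = 1$).

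For $b \ge 2$, I would apply the hypothesis to the ideal-type singularity $(I_X, b)$ on the habitat $(W, \emptyset)$. Because $X$ is irreducible over a field of characteristic zero, it is generically smooth, so at a generic point of $X$ the order of $I_X$ equals $1 < b$, making $\Sing(I_X, b) = \{p : \ord_p(I_X) \ge b\}$ a proper closed subset of $X$. The hypothesis provides a thread of blowups at straight centres $Z_j$ contained in the successive top-order strata, terminating with $\max \ord(I^{(k)}) < b$ for the controlled transform $I^{(k)}$. Writing $I^{(k)}$ as a product of a monomial in the exceptional divisors and the exceptional-free strict transform $\tilde I_X$ of $I_X$ along the whole thread, and using the additivity of orders along smooth divisors, I deduce $\max \ord(\tilde I_X) < b$. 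Since each $Z_j$ is strictly smaller than the current strict transform of $X$ (it is forced into a proper closed subset thereof), the strict transforms remain birational to $X$. The inner induction on $b$ then drives us down to $b = 1$.

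When $b = 1$, every point of $X$ admits some $f \in I_X$ with $\ord_p(f) = 1$, so $X$ is locally contained in the smooth hypersurface $V(f) \subset W$. Restricting the ambient to such a hypersurface lowers $\dim W$ by one, after which the outer induction yields a desingularization. The hard part will be globalizing this descent: in general no global smooth hypersurface through $X$ need exist, so one must either work locally and glue (which requires a functoriality or canonicity property of the resolution) or replace $(I_X, 1)$ by a more elaborate singularity whose resolution directly forces smoothness of $X$. Granting this globalization, the composition of all blowups produced is proper (blowups of smooth centres being projective) and birational (each centre is a proper subvariety of the current strict transform), and the final strict transform $\tilde X$ is nonsingular, yielding the desired proper birational map $\tilde X \to X$.
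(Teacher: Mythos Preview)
Your outline diverges from the paper's argument and, as you yourself flag, leaves a real gap at the $b=1$ step. The paper's proof uses neither a double induction nor any descent in the ambient dimension.

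In the hypersurface case the paper simply resolves $(\Id(X),2)$ once: a hypersurface is nonsingular exactly where its ideal has order at most~$1$, and since the ideal of the proper transform contains the controlled transform of $\Id(X)$, emptying $\Sing(\Id(X),2)$ already forces the proper transform to have order $\le 1$ everywhere. No induction through the values of $b$ is needed.

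In higher codimension the paper does not attempt to place $X$ inside a smooth hypersurface at all. Instead it resolves $(\Id(X),1)$, whose singular locus is the zero set of $\Id(X)$, namely $X$ itself and then its successive transforms. A resolution empties this locus by blowups at straight (hence nonsingular) centres. Since the proper transform of the irreducible variety $X$ lies in the singular locus throughout, there is a first step at which the proper transform itself is blown up; being (a component of) a straight centre, it is nonsingular at that moment. One truncates the thread just before that step and obtains the desingularization.

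The globalization problem you identify is genuine: a global smooth hypersurface containing $X$ need not exist, and gluing local resolutions would require a functoriality statement not available at this point (compare the paper's later Example~\ref{ex:moeb} for the analogous failure of global descent). The $(\Id(X),1)$-and-truncate device is precisely the missing idea that replaces your $b=1$ step, and it makes your order-lowering induction for $b\ge 2$ unnecessary as well.
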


\begin{proof}
Let $X\subset W$ be a variety embedded in a nonsingular ambient space $W$.
If $X$ is a hypersurface, then we simply resolve the singularity $(\Id(X),2)$. The proper
transform of $X$ is then a subscheme of the transform of $(\Id(X),2)$. Since the transform
of $(\Id(X),2)$ has no points of order~1, also the proper transform has no points of order 1,
and therefore it is a nonsingular hypersurface.

In higher codimension, there exist singular varieties with an ideal of order~1,
namely varieties that are embedded in some smooth hypersurface;
so it is not enough to resolve the $(\Id(X),2)$. 
Instead, we resolve the singularity $(\Id(X),1)$ and take only the part of the resolution
where the proper transform of $X$ is not yet blown up. In the next step, when the proper transform
is blown up, it must be a nonsingular subvariety. So the thread defines a sequence of blowing ups
such that the proper transform is nonsingular, and this is a desingularization.
\end{proof}

\begin{remark}
The resolutions obtained in this way are {\em embedded resolutions}: the singular variety $X$ is
embedded in a nonsingular ambient space $W$, and one constructs a proper birational morphism $\pi:\tilde{W}\to W$
such that the proper transform of $X$ is nonsingular. Moreover, the morphism $\pi$ is an isomorphism at
the points outside $X$ and at the smooth points of $X$. In the hypersurface case, the singularity is
already resolved in a neighborhood of these points. In the general case, the singularity is resolved at
the points outside $X$, and it can be resolved by a single blowing up step locally in a neighborhood of
a smooth point of $X$. But this is precisely the step where the resolution of the singularity is truncated.
\end{remark}

\begin{definition}
Let $A_1$ and $A_2$ be two singularities on the same habitat. Then their sum $A_1+A_2$ is defined as the
singularity defined by the Rees algebra generated by $A_1$ and $A_2$.
\end{definition}

\begin{remark}
If $A_1=(I_1,b)$ and $A_2=(I_2,b)$ are ideal-type singularities with the same generating degree, then
$A_1+A_2=(I_1+I_2,b)$. For ideal-type singularities with different generating degrees, there is no
such easy construction.
\end{remark}

\begin{lemma}
The singular locus of $A_1+A_2$ is equal to $\Sing(A_1)\cap\Sing(A_2)$. If $Z$ is a straight subvariety
contained in this intersection, then $\Tr_Z(A_1+A_2)=\Tr_Z(A_1)+\Tr_Z(A_2)$.
\end{lemma}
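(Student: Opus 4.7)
The plan is to reduce both assertions to pointwise statements about orders of functions. First I would establish the characterization that $p\in\Sing(A)$ if and only if every local section $f\in A_i$ has $\ord_p(f)\ge i$ for all $i\ge 1$. Since $\Delta^{i-1}(A_i)$ is generated by $A_i$ together with all iterated first-order partials of its elements taken up to $i-1$ times, its vanishing locus is exactly the set of points at which every element of $A_i$ has order at least $i$.

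With this in hand, the inclusion $\Sing(A_1+A_2)\subseteq\Sing(A_1)\cap\Sing(A_2)$ is immediate from $A_1,A_2\subseteq A_1+A_2$. For the reverse inclusion I would use the description
\[
(A_1+A_2)_i = \sum_{j+k=i} A_{1,j}\cdot A_{2,k},
\]
which holds because each $A_\ell$ is already closed under internal multiplication, so any mixed product can be grouped by source. A generator $f_1 f_2$ with $f_1\in A_{1,j}$ and $f_2\in A_{2,k}$ satisfies $\ord_p(f_1 f_2)\ge \ord_p(f_1)+\ord_p(f_2)\ge j+k=i$ at any point $p\in\Sing(A_1)\cap\Sing(A_2)$. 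Subadditivity of $\ord_p$ under sums then forces every local section of $(A_1+A_2)_i$ to have order at least $i$ at $p$, so $p\in\Sing(A_1+A_2)$.

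For the second claim, let $f\colon W'\to W$ denote the blowup along $Z$ and let $E$ be its exceptional divisor. By multiplicativity and additivity of $f^*$,
\[
f^*\!\bigl((A_1+A_2)_i\bigr)=\sum_{j+k=i} f^*(A_{1,j})\cdot f^*(A_{2,k})=\Id(E)^i\cdot\sum_{j+k=i}\Tr_Z(A_1)_j\cdot\Tr_Z(A_2)_k,
\]
where the last step factors out $\Id(E)^j\cdot\Id(E)^k=\Id(E)^i$. The right-hand sum is precisely $\bigl(\Tr_Z(A_1)+\Tr_Z(A_2)\bigr)_i$. Comparing with the defining relation $f^*((A_1+A_2)_i)=\Id(E)^i\cdot\Tr_Z(A_1+A_2)_i$ and using that $\Id(E)$ is invertible on $W'$, we may cancel $\Id(E)^i$ to conclude.

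The main obstacle is really only the translation in the first paragraph: one must verify that every partial derivative of order $\le i-1$ of an element of $A_i$ actually lies in $\Delta^{i-1}(A_i)$, even though $\Delta$ is defined via first-order partials iterated. This is a routine induction using the Leibniz rule, but it is the one spot where the algebraic definition of $\Sing$ has to be unpacked; once it is in place, the rest of the proof is formal manipulation of the graded structure of Rees algebras.
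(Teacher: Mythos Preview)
Your argument is correct and is precisely the ``straightforward'' verification the paper alludes to: unpack $\Sing$ via the order characterization, use the decomposition $(A_1+A_2)_i=\sum_{j+k=i}A_{1,j}A_{2,k}$, and then cancel the invertible factor $\Id(E)^i$ after pulling back. The paper gives no further detail, so there is nothing to compare beyond noting that you have written out what it leaves implicit.
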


\begin{proof}
Straightforward.
\end{proof}

As a consequence, resolution of $A+B$ separates the singular loci. More precisely, the resolution of $A+B$
defines threads starting with $A$ and $B$, and the singular sets of the final singularities of these
threads have disjoint singular loci.

\begin{definition}
A singularity $A=\oplus_{i=0}^\infty A_i$ is {\em differentially closed} iff $\Delta(A_{i+1})\subseteq A_i$
for all $i\ge 0$.

The {\em differential closure} of a singularity $A$ is the smallest differentially closed singularity
containing $A$.
\end{definition}

A priori it is not clear if the differential closure exists, or in other words if the intersection 
of all differentially closed finitely generated Rees algebras containg $A$ is again finitely generated. 
Assume that
$A$ has generators $f_i$ in degree $d_i$, $i=1,\dots,N$. Then one can use the Leibniz rule to
show that the differential closure is generated by all partial derivatives of order $j<d_i$ in degree $d_i-j$.

\begin{remark}
The notion of differential closure is closely related to differential Rees algebras used in \cite{Villamayor:08}
and with differential saturation of an idealistic filtration defined in \cite{Kawanoue:07}. These two
cases are different but both use higher order differential operators. Here we only use first order
differential operators; this would not work for positive characteristic.
\end{remark}

\begin{definition}
Two singularities $A,B$ on the same habitat are {\em equivalent} iff there exists $N>0$
such that $\Cl(A)_{kN}=\Cl(B)_{kN}$ for all $k\in\Z_+$.
\end{definition}

\begin{lemma}
If two singularities $A$ and $B$ are equivalent, then their singular loci coincide.

Assume that $A$ and $B$ are equivalent, and let $Z$ be a straight subvariety in the singular locus.
Then the transforms of $A$ and $B$ on the blowup at $Z$ are again equivalent.
\end{lemma}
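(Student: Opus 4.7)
The plan has two parts, both pivoting on reducing the data that controls the singular locus (resp.\ the transform) to a single ideal via differential closure.

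\textbf{Part 1 (singular loci agree).} I plan to show the key identity: for any singularity $C$ and any $N\ge 1$, $\Sing(C)=V(\Cl(C)_N)$. Applied to $C=A,B$ with $N$ the witness of $A\sim B$, this gives $\Sing(A)=V(\Cl(A)_N)=V(\Cl(B)_N)=\Sing(B)$. For $N=1$, the inclusion $\Sing(C)\subseteq V(\Cl(C)_1)$ uses that $\Cl(C)_1$ is generated by the partial derivatives of order $d_i-1$ of each generator $f_i\in C_{d_i}$, elements that lie in $\Delta^{d_i-1}(C_{d_i})$ and hence vanish on $\Sing(C)$. The reverse inclusion follows from $C_i\subseteq\Cl(C)_i$ and iterating $\Delta(\Cl(C)_{i+1})\subseteq\Cl(C)_i$ to get $\Delta^{i-1}(\Cl(C)_i)\subseteq\Cl(C)_1$. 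To move from $N=1$ to general $N$, iterating $\Cl(C)_i\subseteq\Delta(\Cl(C)_i)\subseteq\Cl(C)_{i-1}$ gives $\Cl(C)_N\subseteq\Cl(C)_1$, while the Rees property gives $\Cl(C)_1^{\,N}\subseteq\Cl(C)_N$, forcing $V(\Cl(C)_1)=V(\Cl(C)_N)$.

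\textbf{Part 2 (equivalence of transforms).} The transform is linear in each degree, $\Tr_Z(C)_i=\Id(E)^{-i}\cdot C_i\mathcal{O}_{W'}$; this is well-defined because $Z\subseteq\Sing(C)\subseteq V(\Delta^{i-1}(C_i))$ forces $C_i$ to vanish to order $\ge i$ along the smooth center $Z$. Thus $\Cl(A)_{kN}=\Cl(B)_{kN}$ passes immediately to $\Tr_Z(\Cl(A))_{kN}=\Tr_Z(\Cl(B))_{kN}$ for all $k$. To transfer this equality through differential closure on the blowup, I plan to invoke the standard compatibility
\[
\Cl(\Tr_Z(C))=\Cl(\Tr_Z(\Cl(C))),
\]
which reflects that partial derivatives pull back to partial derivatives under the strict transform along a smooth center, and which is proved in e.g.~\cite{Villamayor:08,Encinas_Villamayor:03}. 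This identifies $\Cl(\Tr_Z(A))$ with $\Cl(\Tr_Z(\Cl(A)))$ and similarly for $B$, so the task reduces to proving $\Cl(\Tr_Z(\Cl(A)))_{kM}=\Cl(\Tr_Z(\Cl(B)))_{kM}$ for some $M$ and all $k$.

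The main obstacle is this last step: $\Tr_Z(\Cl(A))$ and $\Tr_Z(\Cl(B))$ coincide only in the arithmetic progression $kN$, but $\Cl(-)_{kN}$ a priori sees information in all degrees. My plan is to pass to the sub-Rees algebra $D$ of $\Cl(A)$ generated by $\Cl(A)_N=\Cl(B)_N$ in degree $N$; then $D\subseteq\Cl(A),\Cl(B)$, its generators lie in a single degree dividing $N$, and its transform is computed from the common degree-$N$ data. Using Part~1 one checks that $D$ has the same singular locus as $\Cl(A)$ and is equivalent to both, so the compatibility lemma, applied to $D$ in place of $A,B$, upgrades the degree-$kN$ coincidence to equality of the two differential closures in degrees $kN$, giving $\Tr_Z(A)\sim\Tr_Z(B)$ with witness $N$.
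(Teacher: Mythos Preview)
Your Part~1 is correct and considerably more explicit than the paper, which simply calls the first assertion ``straightforward.'' Your identity $\Sing(C)=V(\Cl(C)_N)$ for every $N\ge 1$ is exactly the right reduction.

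Your Part~2 is built on the same compatibility statement the paper invokes, namely $\Cl(\Tr_Z(C))=\Cl(\Tr_Z(\Cl(C)))$; the paper cites it as \cite[Lemma~9]{Schicho:12d}, you cite \cite{Villamayor:08,Encinas_Villamayor:03}. So the overall route is the paper's route. You also do something the paper does not: you explicitly flag the residual obstacle that agreement of $\Tr_Z(\Cl(A))$ and $\Tr_Z(\Cl(B))$ only in degrees $kN$ is not obviously preserved by passing to differential closures.

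The gap is in your proposed fix. Introducing $D=(\Cl(A)_N,N)=(\Cl(B)_N,N)$ and noting that $D$ is \emph{equivalent} to $A$ and to $B$ does not break the circle. Equivalence of $D$ and $A$ only gives you $\Cl(D)_{kM}=\Cl(A)_{kM}$ for some $M$, and then applying the compatibility lemma to $D$ lands you in the very situation you were trying to escape: two differentially closed algebras ($\Cl(D)$ and $\Cl(A)$) that agree on an arithmetic progression, whose transforms you now want to compare after a further closure. Nothing has been gained; the obstacle has merely been relocated from the pair $(A,B)$ to the pair $(D,A)$.

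What is actually needed---and what the paper hides inside its citation---is the stronger statement that if $C$ and $C'$ are \emph{differentially closed} and $C_{kN}=C'_{kN}$ for all $k$, then $C=C'$ outright (equivalently, $\Cl\bigl((C_N,N)\bigr)=C$ for differentially closed $C$, at least for $N$ large enough that $(C_N)^k=C_{kN}$). Once you have $\Cl(A)=\Cl(B)$ as graded algebras, the transforms coincide degree by degree and the compatibility lemma finishes the job trivially. Proving that equality uses both constraints on a differentially closed Rees algebra simultaneously: $\Delta^{j}(C_N)\subseteq C_{N-j}$ from differential closedness, and $f\in C_i\Rightarrow f^{k}\in C_{ik}$ from the Rees structure, the latter forcing (on a regular ambient, characteristic~$0$) that $C_i$ cannot exceed what is dictated by $C_N$. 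Your $D$ is the right auxiliary object for this, but the target should be the \emph{equality} $\Cl(D)=\Cl(A)$, not merely equivalence.
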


\begin{proof}
The first statement is straightforward. For the second statement, we use \cite[Lemma~9]{Schicho:12d}:
If $C$ is the differential closure of $A$, and $A'$ and $C'$ are the transforms of $A$ and $C$ along
a center inside the singular locus, then the differential closures of $A'$ and $C'$ are equal.
(In general, the transform of a differentially closed singularity may not be differentially closed.)
\end{proof}

\begin{definition} \label{def:gendeg}
A number $b>0$ is a {\em generating degree} of a singularity $A$ iff $A$ is equivalent to the
ideal-type singularity $(A_b,b)$.
\end{definition}

If $A$ is generated by elements of $A_b$ as an algebra over $\mathcal{O}_W$, then it is
an easy exercise that $b$ is a generating degree for $A$. 

\begin{definition}
A {\em subhabitat} of a habitat $(W,(E_1,\dots,E_n))$ is a straight subvariety
$V\subset W$ which does not have components that are contained in one of the $E_i$, 
together with the sequence of the
intersections $(V\cap E_1,\dots,V\cap E_n)$.
\end{definition}

If $Z$ is a straight subvariety of a subhabitat $(V,\ast)$ of $(W,\ast)$, then it is also
a straight subvariety of $(W,\ast)$. The blowup of $(V,\ast)$ at $Z$ is a subhabitat of
the blowup of $(W,\ast)$ at $Z$; its underlying variety $V'$ is the proper transform of $V$.

\begin{example}
Let $0\le l\le m\le n$.
On the habitat $(\A^n,())$, we have the subhabitat $(V,())$ where $V$ is the hypersurface defined
by $x_{m+1}=\dots=x_n=0$ (say that $x_1,\dots,x_n$ are the coordinate variables). 
Note that $V$ is isomorphic to $\A^m$.
Let $Z$ be the subvariety defined by $x_{l+1}=\dots=x_n=0$. Then the blowup is covered by
$n-l$ charts with coordinate functions $x_1,\dots,x_l,x_k,\frac{x_{l+1}}{x_k},\dots,\frac{x_n}{x_k}$,
where $k=l+1,\dots,n$. The proper transform of $V$ has a non-empty intersection with the $m-l$
charts corresponding to $k=l+1,\dots,m$. It is isomorphic to to the blowup of $\A^m$ at the
subvariety defined by the last $m-l$ coordinates.
\end{example}

\begin{definition}
Let $i:V\to W$ be the inclusion map of a subhabitat $(V,\ast)$ of $(W,\ast)$.
The {\em restriction} of a singularity $B=\oplus_{i=0}^\infty B_i$ on $(W,\ast)$ to $(V,\ast)$ is
defined as the singularity $A=\oplus_{i=0}^\infty A_i$ where
$A_i:=i^\ast(B_i)\mathcal{O}_V$ and $i^\ast(B_i)$ denotes the pullback of $B_i$ along the
inclusion map.

If $(\Id(V),1)$ is a subalgebra of $B$, then we say that $B$ {\em restricts properly} to $V$. 
\end{definition}

\begin{example}
Let $A$ be the ideal-type singularity $(\langle x,y^2-z^3\rangle ,1)$ on the habitat $(\A^3,())$. Then
the hyperplane $x=0$ is a subhabitat to which $A$ restricts properly.
\end{example}

\begin{remark}
If $V$ has codimension~1, then the statement ``$B$ {\em restricts properly} to $V$'' is equivalent
to the statement ``$V$ is a hypersurface of maximal contact'' in \cite{Villamayor:91}.
\end{remark}

If $B$ restricts properly to $V$, then the singular locus of $B$ is contained in $V$ and
is equal to the singular locus of the restriction of $B$ to $V$. The proof is straightforward.

Assume that $B$ restricts properly to $V$, and let $A$ be the restriction. Let $Z\subset\Sing(B)\subset V$
be a straight subvariety. Then the transform of $A$ on the blowup $(V',\ast)$ is equal to
the restriction of the transform of $B$ to the restriction to the subhabitat $(V',\ast)$.

\begin{definition}
Let $i:V\to W$ be the inclusion map of a subhabitat $(V,\ast)$ of $(W,\ast)$.
Let $A$ be a differentially closed singularity on $(V,\ast)$. Then the {\em extension} of $A$ to $(W,\ast)$ 
is defined as the largest differentially closed algebra which is contained in $\oplus_{i=0}^\infty (i^\ast)^{-1}(A_i)$.
\end{definition}

\begin{example}
On the habitat $(\A^2,())$, we consider the singularity $(\langle x,y^2\rangle,1)+(\langle y^3\rangle,2)$
(this is the differential closure of $(\langle x^2+y^3\rangle,2)$). Its restriction to the subhabitat defined by $x$
is equal to $(\langle y^2\rangle,1)+(\langle y^3\rangle,2)$. The inverse of the pullback is
$(\langle x,y^2\rangle,1)+(\langle x,y^3\rangle,2)$. It is not differentially closed, because $\partial_x(x)=1$ is not
contained in the degree~1 component. When we remove $x$ from the list of generators in degree~2, we
get $(\langle x,y^2\rangle,1)+(\langle y^3\rangle,2)$, and this is the extension. 
\end{example}

\begin{lemma}
Let $i:V\to W$ be the inclusion map of a subhabitat $(V,\ast)$ of $(W,\ast)$.
Let $B$ be a differentially closed singularity on $(W,\ast)$ which restricts properly to $V$. Then
the extension of the restriction of $B$ to $V$ is equal to $B$.

Let $A$ be a differentially closed singularity on $(V,\ast)$. Then the extension of $A$ restricts properly to $V$,
and its restriction is equal to $A$.
\end{lemma}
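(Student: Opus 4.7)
The plan is to attack both equalities through a local Taylor expansion in coordinates adapted to $V$. Fix local coordinates $u_1,\dots,u_n$ on $W$ such that $V$ is cut out by $u_{m+1}=\dots=u_n=0$, and let $s\colon\mathcal{O}_V\to\mathcal{O}_W$ be the canonical section identifying $\mathcal{O}_V$ with the subring of $\mathcal{O}_W$ independent of the transverse coordinates. For every $f\in\mathcal{O}_W$ and $d\ge 0$, Taylor's formula in the transverse variables gives
\[
 f=\sum_{|\gamma|\le d} s(c_\gamma)\,u^\gamma+r,\qquad c_\gamma:=\frac{1}{\gamma!}(\partial^\gamma f)|_V,\quad r\in\Id(V)^{d+1},
\]
where $\gamma$ runs over multi-indices in the transverse directions.

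I would handle Part~2 first. Set $B:=\Ext(A)$. The singularity $(\Id(V),1)$ is differentially closed (by Leibniz, derivatives of $\Id(V)^n$ land in $\Id(V)^{n-1}$) and is contained in $\oplus_i(i^*)^{-1}(A_i)$, since $i^*(\Id(V)^n)=0$; by the maximality defining the extension, $(\Id(V),1)\subseteq B$, so $B$ restricts properly. The inclusion $i^*(B)\subseteq A$ is direct from the defining containment. For the reverse inclusion, I would exhibit the local Rees algebra
\[
 R_i:=\sum_{k=0}^{i} s(A_{i-k})\cdot\Id(V)^k\cdot\mathcal{O}_W,
\]
i.e.\ the $\mathcal{O}_W$-algebra generated by $s$ applied to generators of $A$ together with $u_{m+1},\dots,u_n$ in degree~$1$. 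Differential closure of $R$ is checked on generators: tangential derivatives of $s(a)$ equal $s(\partial a)\in s(A_{d-1})\subseteq R_{d-1}$ by differential closure of $A$; transverse derivatives of $s(a)$ vanish; derivatives of the $u_l$ lie in $R_0=\mathcal{O}_W$. The containment $R\subseteq\oplus_i(i^*)^{-1}(A_i)$ is clear because only the $k=0$ summand survives restriction, giving $i^*(R_i)=A_i$. Hence $R\subseteq B$ and $A=i^*(R)\subseteq i^*(B)$.

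For Part~1, set $A:=\Rest(B)$. The inclusion $B\subseteq\Ext(A)$ is immediate: $B$ is differentially closed and $B_i\subseteq(i^*)^{-1}(i^*(B_i))=(i^*)^{-1}(A_i)$. The main obstacle is the reverse $\Ext(A)\subseteq B$, which I plan to reduce to the following \emph{lifting sublemma}, proven by induction on $i$: for every $a\in A_i$, one has $s(a)\in B_i$. The base $i=0$ is trivial. For the inductive step, pick any $b\in B_i$ with $i^*(b)=a$ and apply the Taylor expansion at order $d=i$; differential closure of $B$ yields $\partial^\gamma b\in B_{i-|\gamma|}$, hence $b_\gamma\in A_{i-|\gamma|}$, so by induction $s(b_\gamma)\in B_{i-|\gamma|}$ for $|\gamma|\ge 1$; proper restriction puts $u^\gamma\in\Id(V)^{|\gamma|}\subseteq B_{|\gamma|}$, whence $s(b_\gamma)u^\gamma\in B_i$; and $r\in\Id(V)^{i+1}\subseteq B_{i+1}\subseteq B_i$, the final inclusion using the decreasing property $B_{i+1}\subseteq\Delta(B_{i+1})\subseteq B_i$ enjoyed by every differentially closed singularity. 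Solving the Taylor identity for $s(a)=s(b_0)$ then exhibits it as an element of $B_i$. Given the sublemma, the same Taylor expansion applied to an arbitrary $f\in\Ext(A)_d$ concludes Part~1: differential closure of $\Ext(A)$ together with $\Ext(A)\subseteq\oplus_i(i^*)^{-1}(A_i)$ puts each $c_\gamma$ in $A_{d-|\gamma|}$, so each $s(c_\gamma)u^\gamma\in B_d$ and $r\in B_d$, giving $f\in B_d$. The sublemma is the crux, since it must simultaneously leverage differential closure of $B$, proper restriction, and the decreasing-sequence property; the passage from local to global is harmless because both $B$ and $\Ext(A)$ are intrinsic sheaf-level constructions.
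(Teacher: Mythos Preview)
Your argument is correct and supplies what the paper omits: the paper's own proof of this lemma is nothing more than a citation to \cite[Theorem~11]{Schicho:12d}, with a pointer to the ``commutativity'' statement in \cite{Encinas_Hauser:02}. The Taylor-expansion approach you take, organised around the lifting sublemma, is the natural one and is essentially what underlies those references; in particular your inductive proof of the sublemma (using differential closure of $B$ to push $\partial^\gamma b$ down, proper restriction to absorb the monomials $u^\gamma$, and the inclusion $B_{i+1}\subseteq\Delta(B_{i+1})\subseteq B_i$ to handle the remainder) is exactly the mechanism that makes the result work.

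One point deserves to be made explicit. The splitting $s\colon\mathcal{O}_V\to\mathcal{O}_W$ you invoke---identifying $\mathcal{O}_V$ with the subring ``independent of the transverse coordinates''---need not exist Zariski-locally for a smooth pair $V\subset W$; it is guaranteed only after passing to completions (or \'etale-locally, or analytically). Your closing sentence about ``intrinsic sheaf-level constructions'' gestures at why this is harmless, but the actual justification is that both sides of the asserted equalities are coherent ideal sheaves in each degree, so equality may be verified after the faithfully flat base change to $\hat{\mathcal{O}}_{W,p}$ at every $p\in V$. With that said, the argument goes through unchanged.
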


\begin{proof}
This is \cite[Theorem~11]{Schicho:12d}. It compares to the ``commutativity'' statement 
in \cite{Encinas_Hauser:02}.
\end{proof}

\begin{definition}
Let $(W,\ast)$ be a habitat, and let $m\le\dim(W)$ be a non-negative integer.
A {\em gallimaufry} of dimension $m$ on $(W,\ast)$ is a differentially closed singularity $A$, such that for every 
point $p$ in the singular locus, there is an open subset $U\subset W$ and a subhabitat
$(V,\ast)$ of the open restriction $(U,\ast)$ of dimension $m$, such that $A|_U$ restricts properly to $V$.
Such an open subhabitat is called {\em zoom} for $A$ at $p$.

Let $A$ be a gallimaufry of dimension $m>0$.
Assume that there exists an open cover of the habitat of $A$ such that for every open subset $U$,
there is a subhabitat of dimension $m-1$ to which $A|_U$ properly restricts; in other words,
$A$ can also be considered as a gallimaufry of dimension $m-1$. 
Then we say that the gallimaufry $A$ descends to dimension $m-1$.
\end{definition}

Any singularity can be considered as a gallimaufry of dimension $\dim(W)$. Any gallimaufry
of dimension $m<\dim(W)$ can also be considered as a gallimaufry of dimension $m+1$. 
The dimension of the singular locus of a gallimaufry is less than or equal to the dimension 
of the gallimaufry.
The unit singularity on $(W,\ast)$ can be considered as a gallimaufry of any dimension $m\le\dim(W)$.

\begin{example} \label{ex:moeb}
Here is an example that shows that the passage to local covers is really necessary.
Let $C\subset\A^3$ be an affine smooth space curve which is a complete intersection with ideal
generated by $F,G\in K[x,y,z]$. Let $f:C\to\p^1$ be a regular map that cannot be extended
to $\A^3$. For instance, we could set $K=\R$ and $C$ to be the circle $x^2+y^2-1=z=0$ and $f$ as the map
$(x,y,z)\to (y:1-x)=(1+x:y)$. Let $I$ be the ideal of all functions $g$ vanishing along
$C$ such that for all $p\in C$, the gradient of $p$ is a multiple of 
the gradient of $f_1(p)F+f_2(p)G$,
where $f(p)=(f_1(p):f_2(p))$. In the concrete case of the unit circle and $f$ as above, the ideal
is generated by $(x^2+y^2)(1+x)+yz,(x^2+y^2-1)y+z(1-x),(x^2+y^2-1)z,z^2$. 

The ideal-type singularity $(I,1)$ is locally analytically isomorphic to $(\langle x,y\rangle, 1)$,
bacause locally analytically we can assume $C$ is the line $x=y=0$ and the gradient of elements
in the ideal are multiples of the gradient of $x$. Still locally analytically, the hyperplane defined by
$x$ is a subhabitat with proper restriction. In the concrete example of the unit circle, one can cover
$\A^3$ by the three open subsets: $U_1$ is defined by $x+1\ne 0$ and removing the point $(x,y,z)=\left(\frac{1}{3},0,0\right)$,
$U_2$ is defined by $x-1\ne 0$, and $U_3$ is the complement of the unit circle. In $U_1$, the restriction to the subhabitat
defined by $(x^2+y^2-1)(1+x)+yz$ is proper (the only singular point of the habitat has been removed from $U_1$); 
in $U_2$, the restriction to the subhabitat $(x^2+y^2-1)y+z(1-x)$ is proper;
in $U_3$, the singularity is resolved, so the restriction to any subhabitat is proper. Therefore we can
consider $(I,1)$ as a gallimaufry in dimension~2.

On the other hand, we can show that $(I,1)$ does not globally restrict properly to a subhabitat of dimension~2.
Assume, indirectly, that $H$ is the equation of such a surface. Then $H$ lies in the ideal of $C$, hence
we can write $H=AF+BG$ for some $A,B\in K[x,y,z]$. Then $(x,y,z)\mapsto (A(x,y,z):B(x,y,z))$ would be an extension
of $f:C\to\p^1$, contradicting our assumption that such an extension does not exist.
\end{example}

%\begin{definition}
%Let $A$ be a gallimaufry of dimension $m>0$ on a habitat $(W,\ast)$. 
%Assume that $A$ can also be considered as a gallimaufry of dimension $m-1$. 
%Then we say that the gallimaufry $A$ descends to dimension $m-1$.
%\end{definition}

\begin{lemma}
Let $A$ be a gallimaufry of dimension $m>0$. 
If $A$ descends to $m-1$, then any transform of $A$ also descends to $m-1$.
\end{lemma}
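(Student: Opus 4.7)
The plan is to pull the open cover that witnesses descent of $A$ back along the blowup and check that the proper transforms of the dimension-$(m{-}1)$ subhabitats witness descent of the transform $A'$. Fix an open cover $\{U_i\}_{i\in I}$ of $W$ together with subhabitats $(V_i,\ast) \subset (U_i,\ast)$ of dimension $m-1$ such that $A|_{U_i}$ restricts properly to $V_i$ (such a cover exists by the descent hypothesis). Let $f:W'\to W$ be the blowup at the straight center $Z\subset\Sing(A)$, and let $A'$ denote the transform. Then $\{f^{-1}(U_i)\}_{i\in I}$ is an open cover of $W'$, and on each piece $f$ restricts to the blowup of $U_i$ along $Z\cap U_i$.

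Fix $i$. Since $A|_{U_i}$ restricts properly to $V_i$, the singular locus of $A|_{U_i}$ lies in $V_i$, and in particular $Z\cap U_i\subset V_i$. First I would verify that $Z\cap U_i$ is straight as a subvariety of the subhabitat $(V_i,\ast)$: both $V_i$ and $Z\cap U_i$ are straight in $U_i$ with $Z\cap U_i\subset V_i$, and the normal crossings hypothesis on the habitat lets one choose, at each point of $Z\cap U_i$, a single regular system of parameters on $U_i$ that simultaneously defines $V_i$, $Z$, and every hypersurface of the habitat sequence. Restricting such a system to $V_i$ exhibits straightness of $Z\cap U_i$ in $(V_i,\ast)$. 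The proper transform $V_i'$ of $V_i$ is then a subhabitat of $f^{-1}(U_i)$ of dimension $m-1$.

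Next I would invoke the commutativity of transform and restriction stated earlier in this section (the paragraph beginning ``Assume that $B$ restricts properly to $V$\dots''): with $B=A|_{U_i}$ restricting properly to $V_i$ and $Z\cap U_i$ a straight subvariety of $\Sing(B)\subset V_i$, the transform $A'|_{f^{-1}(U_i)}$ again restricts properly to $V_i'$, with restriction equal to the transform of the restriction of $A|_{U_i}$ to $V_i$. Consequently $\{f^{-1}(U_i)\}$ together with $\{V_i'\}$ is a cover witnessing descent of $A'$ to dimension $m-1$.

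The main obstacle I expect is the straightness claim in the second paragraph, namely that a straight subvariety contained in a subhabitat is itself straight as a subvariety of that subhabitat; this requires producing a single regular system of parameters compatible at once with three normal-crossings conditions and is not purely formal from straightness in the ambient $U_i$. A secondary subtlety is that a transform of a differentially closed singularity need not be differentially closed, so to stay inside the class of gallimaufries one should read $A'$ either up to equivalence or as the differential closure of the raw transform. By the lemma on transforms of equivalent singularities this replacement preserves singular loci and the property of restricting properly, so it does not affect the conclusion.
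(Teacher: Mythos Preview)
Your argument is essentially the paper's own proof, expanded in detail: the paper gives a one-line argument that if $V$ is a subhabitat of dimension $m-1$ to which $A$ properly restricts, then the transform restricts properly to the proper transform of $V$. Your pullback of the open cover and your appeal to the transform/restriction compatibility paragraph are exactly this, made explicit.

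The two obstacles you flag are not addressed in the paper's proof either; it treats them as routine. Your straightness concern is legitimate but resolvable: since $V$ is straight in $U_i$ and $Z\cap U_i$ is straight in $U_i$ with $Z\cap U_i\subset V$, one can indeed choose a common regular system of parameters at each point (this is a standard normal-crossings manipulation, and the paper's earlier remark that the blowup of $(V,\ast)$ at $Z$ is a subhabitat of the blowup of $(W,\ast)$ implicitly relies on it). Your differential-closure remark is also well taken; the paper is silently working up to equivalence here, and as you note the lemma on equivalent singularities makes this harmless.
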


\begin{proof}
If $V$ is a subhabitat of dimension $m-1$ to which $A$ properly restricts, then
the transform restricts properly to the proper transform of $V$.
\end{proof}

\begin{definition}
A gallimaufry $A$ of dimension $m$ is {\em bold} if $\dim(\Sing(A))=m$.
\end{definition}

\begin{example}
A gallimaufry $A$ of maximal dimension $n=\dim(W)$ is bold if and only if there is an irreducible
component of $W$ on which $A$ is the zero singularity.
\end{example}

\begin{lemma} \label{lem:bold}
Let $A$ be a bold singularity. Then the $m$-dimensional locus $Z$ of $\Sing(A)$ is straight,
and the transform of $A$ on the blowup along $Z$ is not bold.
\end{lemma}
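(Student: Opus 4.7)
The plan is to work locally through the zoom structure and reduce both claims to a picture on a single $m$-dimensional subhabitat.

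For straightness of $Z$, fix $p \in Z$ and pick a zoom $V \subseteq U$ of dimension $m$ so that $A|_U$ restricts properly to $V$. Proper restriction forces $\Sing(A|_U) \subseteq V$. Since $V$ is pure of dimension $m$ and $Z$ is the union of the $m$-dimensional components of $\Sing(A)$, every irreducible component of $Z$ meeting $U$ must coincide with an irreducible component of $V$ (same dimension, contained in $V$). Because $V$ is straight, so are its components; hence $Z$ is straight near $p$, and covering $Z$ by such opens shows $Z$ is straight globally.

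For the transform, work again on such a $U$. Decompose $V = Z_U \cup V''$, where $Z_U := Z \cap U$ is the union of components of $V$ lying in $Z$ and $V''$ is the union of the remaining components. Using the degenerate-case rule for blowing up a component together with the compatibility of proper restriction with blowup (stated right after the definition of restriction), the transform $A'$ restricts properly to the proper transform $V'$ of $V$; this $V'$ is pure of dimension $m$ and is obtained by blowing up $V''$ along $V'' \cap Z_U$. In particular $\Sing(A') \subseteq V'$, so any $m$-dimensional component of $\Sing(A')$ would have to be an entire irreducible component of $V'$.

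To rule this out, observe that each component of $V'$ is the proper transform of some component $V_0$ of $V''$. By construction $V_0 \not\subseteq Z$, and since $Z$ already contains every $m$-dimensional component of $\Sing(A)$, the irreducible $m$-dimensional $V_0$ is not contained in $\Sing(A)$. Pick a point $q \in V_0$ lying outside both $Z$ and $\Sing(A)$; as the blowup is an isomorphism off $Z$, the corresponding point $q' \in V'$ is not in $\Sing(A')$. Thus the proper transform of $V_0$ is not contained in $\Sing(A')$, giving $\dim \Sing(A') < m$ locally, hence globally, so $A'$ is not bold.

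The main obstacle is the local-to-global passage: the zoom $V$ and the decomposition $V = Z_U \cup V''$ are local data, whereas $Z$ and the blowup are global. The patching is forced by the fact that $Z$ is defined intrinsically as the $m$-dimensional locus of $\Sing(A)$, so it is recovered set-theoretically inside each zoom, and both conclusions (straightness and the non-containment of components in $\Sing(A')$) are pointwise conditions that glue automatically. One should also verify along the way that $A'$ inherits a dimension-$m$ gallimaufry structure, but this is immediate from applying the preservation of proper restriction under blowup zoom by zoom.
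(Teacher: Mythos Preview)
Your argument is correct and follows essentially the same route as the paper's proof: both work locally through a zoom $V$ of dimension $m$, identify $Z$ locally with a union of irreducible components of $V$ (hence straight), and then observe that the proper transform $V'$ of $V$ loses exactly those components, so no surviving component lies in $\Sing(A')$. The only cosmetic difference is that the paper characterizes the components of $Z$ inside $V$ as those on which the restricted singularity is the zero singularity, whereas you reach the same conclusion by a pure dimension count; and for non-boldness of the transform the paper just says ``the blowup of $V$ along $Z$ removes all components in $Z$'', while you spell this out by picking a point $q\in V_0\setminus\Sing(A)$ and using that the blowup is an isomorphism there.
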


\begin{proof}
Let $V$ be a subhabitat of dimension $m$ to which $A$ properly restricts.
Then $\Sing(A)$ is the equal to the union of all irreducible components $V_0$ of $V$ such that the restriction
to $V_0$ is the zero singularity. Since $V$ is straight, it follows that $Z$ is straight.

Let $\pi:W'\to W$ be the blowup along $Z$.
Locally at some neighbourhood of a point $p$ in an irreducible component $V_0$ of $Z$, 
the blowup manifold of the subhabitat $V$ is empty. Hence the blowup of $V$ along $Z$ just
removes all components in $Z$. It follows that the transform of the restricted singularity is
not bold. Hence the transform of the $A$ as a gallimaufry of dimension $m$ is not bold.
\end{proof}

The next definition introduces a numerical invariant of the order of the singularity. It is
based on Hironaka's order function. Other authors used iterated order functions to construct
an invariant governing the resolution process. Here, the resolution process should not be
determined by an invariant, but we still need some order concept.

\begin{definition}
Let $(I,b)$ be an ideal-type singularity on $(W,(E_1,\dots,E_r))$ which is not bold
as a gallimaufry in dimension $\dim(W)$, i.e. $I$ 
is not the zero ideal on any component of $W$. Let $\{i_1,\dots,i_k\}$ be the set of all 
hypersurface indices $i$ such that $E_i\cap\Sing(A)\ne\emptyset$. The {\em monomial factor}
of $(I,b)$ is defined as the sequence 
$\left(\frac{e_1}{b},\dots,\frac{e_k}{b}\right)$ such that
$I$ $\subseteq$ $\Id(E_{i_1})^{e_1}\cdots\Id(E_{i_k})^{e_k}$, with integers $e_1,\dots,e_k$ chosen
as large as possible.

The {\em maxorder} of $(I,b)$ is defined as
$\frac{\min\{a\mid \Delta^a(\tilde{I})=\langle 1\rangle\}}{b}$,
where $\tilde{I}$ is the ideal sheaf $\Id(E_1)^{-e_1}\cdots\Id(E_r)^{-e_r}I$.
Note that this the maximum of the function $p\mapsto \ord_p(\tilde{I})/b$.

For an arbitrary singularity that is not bold, monomial factor and maxorder are defined by passing
to an equivalent ideal-type singularity.

For a gallimaufry that is not bold, monomial factor and maxorder are defined by restricting to
a subhabitat of correct dimension.
\end{definition}

In order to show that the definitions are valid for arbitrary singularities, we use
that any singularity is equivalent to an ideal-type singularity, by the comment after 
Definition~\ref{def:gendeg}.
Moreover, one needs to show that two equivalent ideal-type
singularities have the same monomial factors and order; in our setup, this is a straightforward
consequence of the statement that if the ideal-type singularities $(I_1,b_1)$ and $(I_2,b_2)$
are equivalent, then there exist positive integers $n_1,n_2$ such that $b_1n_1=b_2n_2$
and $I_1^{n_1}=I_2^{n_2}$.
One may compare with \cite{Hironaka:77}, where te independence of the choice of ideal-type
representative is shown for a similar equivalence relation.
The validity of the definitions for gallimaufries (independence of the choice of the local subhabitat)
is a consequence of \cite[Proposition~13]{Schicho:12d}. The proof, which uses local isomorphisms
of restrictions to different subhabitats, is far from being trivial. The idea to use local isomorphisms
to compare orders on coefficient ideals defined on different hypersurfaces of maximal contact
has been introduced in \cite{Wlodarczyk:07}.

\begin{example}
We consider the singularity $(\langle x^2-y^3\rangle,1)$ on the habitat $(\A^2,())$. We blowup the
origin $(0,0)$, which is contained in the singular locus. 
The blowup variety can be covered by two charts, which already occured in Example~\ref{ex:blow}.
In the first chart, the transform is 
$(\langle (1-x\tilde{y}^3)x\rangle,1)$; in the second chart, it is the ideal-type singularity
$(\langle(\tilde{x}^2-y)y\rangle,1)$. For the transform, the monomial factor is $(1)$;  and the ideal sheaf $\tilde{I}$ is 
$\langle 1-x\tilde{y}^3\rangle$ in the first chart and $\langle\tilde{x}^2-y\rangle$ in the
second chart, hence the maxorder is 1.
\end{example}

\begin{lemma} \label{lem:howto}
Let $A$ be a non-bold gallimaufry of dimension $m$ on a habitat $(W,(E_1,\dots,E_r))$.
Assume, for simplicity, that all hypersurfaces $E_i$ have a non-empty intersection 
with the singular locus.
Let $(a_1,\dots,a_r)$ be the monomial factor of $A$, and let $o$ be the maxorder.

Let $\{i_1,\dots,i_k\}$ be a subset of $\{1,\dots,r\}$.
If $a_{i_1}+\dots+a_{i_k}+o<1$, then $E_{i_1}\cap\dots\cap E_{i_k}\cap\Sing(A)=\emptyset$.
If $a_{i_1}+\dots+a_{i_k}\ge 1$, then $E_{i_1}\cap\dots\cap E_{i_k}\subset\Sing(A)$.

Let $Z\subset\Sing(A)$ be a straight subvariety of $(W,(E_1,\dots,E_r))$, and let $\{i_1,\dots,i_k\}$ be 
the subset of $\{1,\dots,r\}$ of all $i$ such that $Z\subset E_i$. Then the monomial factor
of the transform on the blowup at $Z$ is of the form $(a_1,\dots,a_r,a_{r+1})$, where
\[ a_{i_1}+\dots+a_{i_k}-1 \le a_{r+1} \le a_{i_1}+\dots+a_{i_k}+o-1 . \]
\end{lemma}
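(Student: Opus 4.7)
The plan is to reduce to the case where $A=(I,b)$ is an ideal-type singularity by passing to an equivalent representative (and, if $A$ is a gallimaufry of dimension $m<\dim W$, first restricting to a local subhabitat of dimension $m$, which affects neither $\Sing(A)$ nor the two invariants). Then the factorization
\[
I \;=\; \Id(E_1)^{e_1}\cdots\Id(E_r)^{e_r}\cdot\tilde I, \qquad e_j = a_j b,
\]
with maximal exponents $e_j$ and residual ideal $\tilde I$ of global maximum order $ob$, makes every assertion in the lemma accessible to direct local computation.

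For parts~1 and~2 I would use pointwise the identity
\[
\ord_p(I) \;=\; \sum_{j\colon p\in E_j} e_j \;+\; \ord_p(\tilde I).
\]
At $p\in E_{i_1}\cap\cdots\cap E_{i_k}$ this is already $\ge e_{i_1}+\cdots+e_{i_k}$, so $a_{i_1}+\cdots+a_{i_k}\ge 1$ forces $\ord_p(I)\ge b$ and hence $p\in\Sing(A)$, which is part~2. For part~1, at a point $p$ lying on no further $E_j$ one gets the upper bound $\ord_p(I)\le (a_{i_1}+\cdots+a_{i_k}+o)\,b < b$, so $p\notin\Sing(A)$; points that lie on additional $E_j$'s are handled by applying the same statement to the strictly larger index set $\{j\colon p\in E_j\}$.

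The real content is part~3. Because $Z$ is straight and the set $\{j\colon Z\subset E_j\}$ equals $\{i_1,\dots,i_k\}$, the standard blowup formulas give
\[
f^\ast(\Id(E_j)) \;=\; \begin{cases}\Id(E_j')\cdot\Id(E_{r+1}) & \text{if } j\in\{i_1,\dots,i_k\},\\ \Id(E_j') & \text{otherwise,}\end{cases}
\]
together with $f^\ast(\tilde I) = \Id(E_{r+1})^{\ord_Z(\tilde I)}\cdot\tilde I'$ for a uniquely determined ideal sheaf $\tilde I'$ on $W'$. Substituting into the factorization of $I$ and then dividing out $\Id(E_{r+1})^b$ to pass to the transform $I'$, the exponent in the new monomial slot becomes
\[
e_{r+1} \;=\; e_{i_1}+\cdots+e_{i_k}+\ord_Z(\tilde I)-b,
\]
while the exponents along the proper transforms $E_j'$ stay equal to $e_1,\dots,e_r$. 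Dividing by $b$ and using $0\le\ord_Z(\tilde I)\le ob$ (the upper bound being the very definition of the maxorder) yields the asserted interval for $a_{r+1}$.

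The main obstacle is to check that this factorization of $I'$ is the genuine monomial factor of the transform, i.e.\ that $\tilde I'$ is not divisible by any $\Id(E_j')$ and not by a further power of $\Id(E_{r+1})$. The latter is built into the definition of $\ord_Z(\tilde I)$. For the former: maximality of the $e_j$ gives $\ord_{E_j}(\tilde I)=0$, and because $E_j'\to E_j$ is birational this generic order is preserved, so $\ord_{E_j'}(f^\ast(\tilde I))=0$ and therefore also $\ord_{E_j'}(\tilde I')=0$.
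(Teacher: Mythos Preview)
Your approach coincides with what the paper intends: the paper's proof is the single phrase ``Straightforward (using local analytic coordinates where the center and all hypersurfaces are defined by coordinate functions)'', and your factorisation $I=\Id(E_1)^{e_1}\cdots\Id(E_r)^{e_r}\tilde I$ together with the blowup bookkeeping is exactly the content of that phrase. Parts~2 and~3 of your write-up are correct and more detailed than anything in the paper; in particular your verification that the residual ideal $\tilde I'$ carries no further divisibility by $\Id(E'_j)$ or $\Id(E_{r+1})$ is a point the paper does not spell out.

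There is, however, a genuine gap in your treatment of part~1. You dispose of a point $p\in E_{i_1}\cap\dots\cap E_{i_k}$ that also lies on further hypersurfaces $E_j$ by ``applying the same statement to the strictly larger index set $\{j:p\in E_j\}$''. But the hypothesis $a_{i_1}+\dots+a_{i_k}+o<1$ does \emph{not} imply the corresponding inequality for the enlarged index set, so nothing allows that recursion. In fact the first assertion of the lemma, read literally, fails: for $A=(\langle xy^2\rangle,3)$ on $(\A^2,(x,y))$ one has $a_1=\tfrac{1}{3}$, $a_2=\tfrac{2}{3}$, $o=0$, and $\Sing(A)=\{(0,0)\}$ meets both $E_1$ and $E_2$; yet with the singleton $\{1\}$ we get $a_1+o=\tfrac{1}{3}<1$ while $E_1\cap\Sing(A)\ne\emptyset$. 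What your argument actually proves (and what the paper presumably intends) is the weaker but correct statement: if $p\in\Sing(A)$ and $\{i_1,\dots,i_k\}=\{j:p\in E_j\}$ is the \emph{full} set of hypersurfaces through $p$, then $a_{i_1}+\dots+a_{i_k}+o\ge 1$. This is the version used later (it bounds the faces of the simplicial complex $\Xi$), and your order identity establishes it directly. You should state part~1 in that corrected form rather than attempt to bootstrap.
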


\begin{proof}
Straightforward (using local analytic coordinates where the center and all hypersurfaces are 
defined by coordinate functions).
\end{proof}

\begin{example} \label{ex:str}
On the habitat $(\A^3,(x,y,z))$, we consider the singularity \linebreak 
$(\langle x^3y^5z^7(x^2+y^5)\rangle,1)$.
Its monomial factor is $(3,5,7)$, and its maxorder is 2. When we blowup the line $x=y=0$, we get two charts.
In the chart with coordinate functions $(\tilde{x}=\frac{x}{y},y,z)$, the transform is
$(\langle \tilde{x}^3y^{9}z^7(\tilde{x}^2+y^3)\rangle,1)$, and 
in the chart with coordinate functions $(x,\tilde{y}=\frac{y}{x},z)$, the transform is
$(\langle x^{9}\tilde{y}^5z^7(1+x^3\tilde{y}^5)\rangle,1)$.
The monomial factor of the transform is $(3,5,7,9)$.

In this example, the maxorder of the transform is again equal to 2, and the transform of the ideal-type
singularity $(\langle x^2+y^5\rangle,2)$ is $(\langle \tilde{x}^2+y^3\rangle,2)$ in the first chart
and $(\langle 1+x^3\tilde{y}^5\rangle,2)$ in the second chart (see also Lemma~\ref{lem:tt} below).
\end{example}

\begin{definition}
A gallimaufry is called {\em monomial} if its maxorder is 0.
\end{definition}

\begin{lemma}
The transform of a monomial gallimaufry is monomial. If $(a_1$, $\dots$, $a_r)$
is the monomial factor, and if $\{i_1,\dots,i_k\}$ is the subset of hypersurface
indices such that $Z\subset E_i$, then 
$(a_1,\dots,a_r,a_{i_1}+\dots+a_{i_k}-1)$ is the monomial factor of the transform on the blowup at $Z$.
\end{lemma}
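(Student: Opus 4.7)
The plan is to reduce to the ideal-type case, compute the transform explicitly, and read off both claims from a single formula.

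First I would exploit the invariance of monomial factor and maxorder under equivalence (together with the comment after Definition~\ref{def:gendeg}) to pass to a local model in which $A$ is represented by the ideal-type singularity $(I,b)$ on a subhabitat of dimension~$m$. Being monomial means the maxorder $o$ is $0$, i.e.\ $\tilde I=\mathcal{O}_W$; equivalently $I$ is itself a pure monomial
\[ I \,=\, \Id(E_1)^{e_1}\cdots\Id(E_r)^{e_r}, \qquad e_i = b a_i. \]

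Next I would carry out the blowup bookkeeping. Let $f:W'\to W$ be the blowup along $Z$ and $E_{r+1}$ the exceptional divisor. Since $Z\subset E_i$ precisely when $i\in\{i_1,\dots,i_k\}$, the pullback $f^\ast\Id(E_i)$ equals $\Id(E_i')$ for $i\notin\{i_1,\dots,i_k\}$ and $\Id(E_i')\cdot\Id(E_{r+1})$ otherwise. Multiplying and then factoring out $\Id(E_{r+1})^b$ as prescribed by the definition of the transform gives
\[ I' \,=\, \Id(E_{r+1})^{e_{i_1}+\dots+e_{i_k}-b}\cdot\prod_{i=1}^r \Id(E_i')^{e_i}. \]

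Both claims drop out of this formula. The right-hand side is again a pure monomial in the hypersurface ideals of the new habitat, so $\widetilde{I'}=\mathcal{O}_{W'}$ and the transform is monomial; and dividing each exponent by $b$ reads off the new monomial factor as $(a_1,\dots,a_r,a_{i_1}+\dots+a_{i_k}-1)$, in agreement with the sharp case $o=0$ of the bounds in Lemma~\ref{lem:howto}. The one thing that requires comment is non-negativity of the new exponent $e_{i_1}+\dots+e_{i_k}-b$, i.e.\ $a_{i_1}+\dots+a_{i_k}\ge 1$; but this is forced by the first bullet of Lemma~\ref{lem:howto}, since $Z$ is a non-empty subvariety of $\Sing(A)\cap E_{i_1}\cap\dots\cap E_{i_k}$. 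I expect that sign check to be the only delicate point; the rest is bookkeeping and the passage to the local ideal-type model.
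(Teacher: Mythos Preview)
Your proof is correct and is essentially the paper's argument spelled out: the paper's proof reads in full ``This is a consequence of Lemma~\ref{lem:howto},'' and your explicit computation (pulling back the pure monomial $I=\prod\Id(E_i)^{e_i}$ and dividing by $\Id(E_{r+1})^b$) is exactly what that citation unpacks to when $o=0$ forces the two bounds on $a_{r+1}$ to coincide. Your direct check that $I'$ is again a pure monomial in the hypersurface ideals is in fact a bit more transparent about why the transform stays monomial than a bare appeal to Lemma~\ref{lem:howto}, which as stated controls only the monomial factor of the transform and not its maxorder.
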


\begin{proof}
This is a consequence of Lemma~\ref{lem:howto}.
\end{proof}

\begin{definition}
A non-bold and non-resolved singularity/gallimaufry is {\em tight} 
if it has trivial monomial factor $(0,\dots,0)$ and maxorder~1.

Let $(I,b)$ be an ideal-type singularity such that $I$ is not zero on any component of the habitat.
Let $\left(\frac{e_1}{b},\dots,\frac{e_r}{b}\right)$ be its monomial factor, and let $o$ be the
maxorder. Assume $o>0$. 
The {\em tightification} of a $(I,b)$
is defined as the differential closure of $(\tilde{I},ob)+(I,b)$,
where $\tilde{I}:=\Id(E_1)^{-e_1}\cdots\Id(E_r)^{-e_r}I$
(note that $ob$ is an integer). 

The tightification of a general singularity is defined by passing to an equivalent
ideal type singularity followed by tightification as defined above.

The tightification of a gallimaufry is defined by restriction to a zoom, singularity tightification,
and extension. The tightification of a gallimaufry $A$ is denoted by $\Tf(A)$. If $A$ is bold or resolved,
then $\Tf(A)$ is not defined.
\end{definition}

\begin{example}
Let $m,n$ be positive integers. On the habitat $(\A^1,())$, the ideal-type singularity $(\langle x^m\rangle ,n)$
has maxorder $\frac{m}{n}$, so it is tight if and only if $m=n$. If $m<n$, then the singularity is
resolved. If $m\ge n$, then the tightification is equal to $(\langle x^m\rangle ,m)$.
\end{example}

It is not apparent that the tightification is well-defined for gallimaufries, one has to show
independence of the local choice of the subhabitat of dimension $m$. We refer to \cite[Proof of Theorem~19]{Schicho:12d}
for the proof.
This proof uses local analytic isomorphisms between restrictions to different habitats
(see \cite{Wlodarczyk:07}).

\begin{remark} \label{rem:global}
For given gallimaufry $A$, it would also be possible to compute monomial factor and maxorder using Jacobian
ideals. Hence the computation of the tightification is the only construction of the resolution algorithm 
which uses subhabitats and therefore local choices (the result is independent of the local choices
by the statement before). 
If we had a different construction without local choices we would have a global algorithm for 
resolution of singularities that never passes to local coverings. 
The author does not have an idea for such a construction.
\end{remark}

\begin{lemma} \label{lem:tt}
The transform of a tight gallimaufry is either tight or resolved.

Let $A$ be a non-bold gallimaufry. Let $a=(a_1,\dots,a_r)$ be its monomial factor.
Assume that $A$ has maxorder $o>0$.
Let $Z$ be a straight subvariety in the singular locus of $\Tf(A)$. 
Let $\{i_1,\dots,i_k\}$ be the subset of hypersurface indices such that $Z\subset E_i$. 
Then the monomial factor of $\Tr_Z(A)$ is $a':=(a_1,\dots,a_r,a_{i_1}+\dots+a_{i_k}+o-1)$,
and if $o'$ is the maxorder, then $o'\le o$. Equality holds if and only if $\Tr_Z(\Tf(A))$
is not resolved; and in this case,
$\Tr_Z(\Tf(A))$ is equivalent to $\Tf(\Tr_Z(A))$.
\end{lemma}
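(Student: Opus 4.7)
The plan is to establish the main numerical statement for a general non-bold gallimaufry with $o>0$, and to deduce the assertion about tight gallimaufries as the special case $o=1$ with trivial monomial factor. I fix an ideal-type representative $(I,b)$ of $A$ with monomial factorization $I=\Id(E_1)^{e_1}\cdots\Id(E_r)^{e_r}\,\tilde I$, so that $ob=\max_p\ord_p(\tilde I)$.

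First I would read off the new monomial factor from the blowup. Because $Z\subset\Sing(\Tf(A))$ and $\Tf(A)$ by construction contains $(\tilde I,ob)$, the order of $\tilde I$ is pinned exactly at $ob$ along $Z$. Pulling $I$ back along $f$ and dividing by $\Id(E_{r+1})^{b}$ gives
\[
I'=\Id(E_{r+1})^{\sum_{j}e_{i_j}+ob-b}\prod_{i=1}^{r}\Id(E_i')^{e_i}\,(\tilde I)',
\]
where $(\tilde I)':=f^{\ast}(\tilde I)/\Id(E_{r+1})^{ob}$ is the transform of $\tilde I$ viewed as an ideal-type singularity of order $ob$. Combined with the upper bound in Lemma~\ref{lem:howto}, this identifies the new monomial factor as $(a_1,\dots,a_r,a_{i_1}+\dots+a_{i_k}+o-1)$ and exhibits $(\tilde I)'$ as the residual ideal of $\Tr_Z(A)$. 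The inequality $o'\le o$ then follows from the standard estimate $\ord_{p'}((\tilde I)')\le ob$ for every $p'$ above $Z$.

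For the equality criterion and the commutativity statement, I would use that transform preserves equivalence and therefore commutes with differential closure up to equivalence, giving
\[
\Tr_Z(\Tf(A))\;\sim\;((\tilde I)',ob)+(I',b),
\]
while the factorization of $I'$ identifies
\[
\Tf(\Tr_Z(A))\;\sim\;((\tilde I)',o'b)+(I',b).
\]
If $o'=o$, the two right-hand sides coincide and the equivalence $\Tr_Z(\Tf(A))\sim\Tf(\Tr_Z(A))$ is immediate. The singular locus of the first expression equals $\Sing((\tilde I)',ob)\cap\Sing(I',b)$, whose first factor---the top-order locus of $(\tilde I)'$---is empty precisely when $o'<o$. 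The inequality $\sum e_{i_j}+ob\ge b$ (from $Z\subset\Sing((I,b))$) together with the factorization of $I'$ gives containment of this top-order locus in $\Sing(I',b)$, so the intersection is empty iff $o'<o$; this is the equality criterion.

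The tight case is the specialization: if $A$ is tight, then $\Tf(A)=A$, $o=1$, and all $a_i=0$, so $a_{r+1}=0$ and $o'\le 1$; the case $o'=1$ gives a tight transform, while the case $o'<1$ gives a resolved one. The main obstacle I expect is the sharpness of the identification of the new monomial factor---the claim that the exceptional exponent is exactly $\sum e_{i_j}+ob-b$, with no further hypersurface factor that could be pulled out of $(\tilde I)'$. This is where the hypothesis $Z\subset\Sing(\Tf(A))$ is essential rather than the weaker $Z\subset\Sing(A)$, since only the former pins $\ord(\tilde I)$ at its maximum $ob$ along $Z$; once the factorization is in hand, the remaining steps are routine bookkeeping against the definition of tightification.
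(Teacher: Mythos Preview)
Your proposal is correct and takes essentially the same approach as the paper: both reduce to an explicit computation with an ideal-type representative, track the monomial factorisation through the blowup, and read off the new exponent and the bound $o'\le o$; the paper simply labels this computation ``straightforward'' and points to Example~\ref{ex:str}, whereas you spell it out. The only point the paper makes explicit that you leave implicit is the preliminary passage from a gallimaufry to a singularity by restricting to a local zoom of dimension $m$ (and the accompanying remark that on open pieces where the maximal order is not attained the local order is already bounded by some $o_1<o$); since the monomial factor and maxorder of a gallimaufry are \emph{defined} via such restrictions, you should state this reduction before fixing the ideal-type representative $(I,b)$.
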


\begin{proof}
Locally at any open subset in which the maximal order is assumed, we can restrict to a subhabitat of dimension $m$, 
and it suffices to show the statement for singularities. 
%Here we even claim that equality holds in the last statement,
%i.e. $\Tr_Z(\Tf(A))=\Tf(\Tr_Z(A))$. 
In this situation, the proof is  straightforward (compare with Example~\ref{ex:str}).

In any other open subset, we get a monomial factor $a''$ with last exponent $,a_{i_1}+\dots+a_{i_k}+o_1-1)$
with some $o_1<o$. In the transform of this open subset, the order is bounded by $o_1$.
\end{proof}

\begin{example}
Let $A$ be the singularity $(\langle(x^2-y^n)y^m\rangle,1)$ on the habitat $(\A^2,x)$, where $n\ge 2, m\ge 0$.
Its monomial factor is $(m)$, its maxorder is 2, and its tightification is $(\langle x^2-y^n\rangle,2)$.
Let $Z$ be the point $(0,0)$. The blowup can be covered by two charts, which already occured in Example~\ref{ex:blow}.
In the first, the transform of $A$ is $(\langle(\tilde{x}^2-y^{n-2})y^{m+1}\rangle,1)$. 
In the second, the transform is $(\langle(1-x^{n-2}\tilde{y}^2)x^{m+1}\tilde{y}^m\rangle,1)$.
The monomial factor $(m,m+1)$.
The transform of $\Tf(A)$ is $(\langle\tilde{x}^2-y^{n-2}\rangle,2)$ in the first chart
and $(\langle 1-x^{n-2}\tilde{y}^2\rangle,2)$ in the second chart.
If $n\ge 4$, then the transform of the tightification is the tightification of the transform, and
the maxorder of $\Tr_Z(A)$ is 2.
If $n=2,3$, then the transform of the tightification is resolved, and the maxorder is 1.
The new tightification is $(\langle\tilde{x}^2-y^{n-2}\rangle,1)$ in the first chart.
\end{example}

\begin{lemma}
Let $A$ be a tight gallimaufry of dimension $m>0$ on a habitat $(W,(E_1,\dots,E_r))$. 
Assume that $\Sing(A)\cap E_i=\emptyset$ for $i=1,\dots,r$. Then $A$ descends to dimension $m-1$.
\end{lemma}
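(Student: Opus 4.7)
The plan is to prove that $A$ is a gallimaufry of dimension $m-1$ by constructing, at every $p\in\Sing(A)$, a local subhabitat of dimension $m-1$ with proper restriction; for the open-cover formulation of descent one additionally covers $W\setminus\Sing(A)$ by affine opens with any local straight subvariety of dimension $m-1$, which restricts $A$ properly because $A_1$ is already the unit ideal there (if $p\notin\Sing(A)$, some $\Delta^{i-1}(A_i)_p=\mathcal{O}_p$, and iterated differential closure $\Delta^{i-1}(A_i)\subseteq A_1$ forces $(A_1)_p=\mathcal{O}_p$).

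So fix $p\in\Sing(A)$ and start from a zoom $(V_m,\ast)$ of dimension $m$ over some open $U\ni p$. By hypothesis $p\notin E_i$ for every $i$, and since $r$ is finite I would shrink $U$ to be disjoint from each $E_i$; on this $U$ the habitat sequence is empty. Restriction preserves differential closure (choose local coordinates on $U$ adapted to $V_m$, so that tangential partial derivatives transport the inclusion $\Delta(A_{j+1})\subseteq A_j$ to the analogous statement on $V_m$), so $B:=A|_{V_m}$ is a differentially closed singularity with trivial monomial factor (no $E_i$ remain on $U$) and maxorder $1$ by tightness of $A$ computed on the zoom.

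The key step is to produce $h\in B_1$ with $\ord_p(h)=1$. Pick a generating degree $b$, so that $B$ is equivalent to $(B_b,b)$; maxorder $1$ then gives $\ord_p(B_b)=b$. Choose $f\in B_b$ with $\ord_p(f)=b$ and take, using characteristic zero, a $(b-1)$-fold first-order partial derivative $h=\partial^{b-1}f$ with $\ord_p(h)=1$. Iterating differential closure yields $\Delta^{b-1}(B_b)\subseteq B_1$, so $h\in B_1$. Lift $h$ to $\tilde{h}\in A_1|_U$ with $\tilde{h}|_{V_m}=h$ (possible because $B_1$ is generated by the restriction of $A_1|_U$), and set $V_{m-1}:=V(\Id(V_m)+\tilde{h}\mathcal{O}_U)\subset U$. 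After a final shrinking of $U$, $V_{m-1}$ is smooth of dimension $m-1$ in $V_m$, hence in $U$; with no $E_i$ on $U$, smoothness coincides with straightness, so $V_{m-1}$ is a subhabitat of $(U,\ast)$. Proper restriction $\Id(V_{m-1})=\Id(V_m)+(\tilde{h})\subseteq A_1|_U$ holds since the first summand lies in $A_1|_U$ by the original zoom and the second by construction.

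The main obstacle is the extraction of $h$ in the third paragraph: this is the single place where tightness (maxorder $1$), characteristic zero (first-order derivatives strictly drop the order at $p$), and differential closure of $A$ (the derivative lands in $B_1$) must cooperate. Everything else — the complement argument, the shrinking of $U$ to kill the $E_i$, the transport of differential closure to the zoom, the lifting of $h$ to $\tilde{h}$, and the straightness check in the empty-hypersurface situation — is then essentially routine.
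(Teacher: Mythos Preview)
Your proof is correct and follows essentially the same route as the paper: restrict to a zoom of dimension $m$, shrink away the $E_i$, and use maxorder~$1$ together with differential closure to produce an order-$1$ element in degree~$1$ whose zero locus gives the $(m-1)$-dimensional subhabitat. The paper compresses your third paragraph into the single assertion $\Delta(A_1)=\mathcal{O}_W$ (so that $A_1$ already contains an order-$1$ element at each singular point), which is precisely what your $(b-1)$-fold differentiation argument establishes; your explicit treatment of the lift $\tilde{h}$ and of the complement $W\setminus\Sing(A)$ is extra care that the paper leaves implicit.
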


\begin{proof}
It suffices to show the statement for singularities.
Moreover, we may assume that $A$ is differentially closed. Then $\Delta(A_1)=\mathcal{O}_W$. 
For any $p\in\Sing(A)$, there exists $f\in (A_1)_p$ of order~1 at $p$. This local section is
defined and still in some open neighbourhood $U$ of $p$. Moreover, the zero set $X$ of $f$ is a hypersurface
in $p$ is a nonsingular point of $X$. We define $U'$ as $U$ minus the singular locus of $X$.
Then $A|_{U'}$ restricts properly to the hypersurface defined by $f$.
\end{proof}

\section{The Game and How to Win It} \label{sec:play}

In this section, we explain the combinatoric part of the resolution.

In any step of the game, the player is given some combinatorial information on a main gallimaufry
which is to be resolved, as well as additional gallimaufries which are related in various ways,
for instance if a gallimaufry is not bold and has positive maxorder then there might be
a tightification. During the game, threads are created; a blowup step adds one gallimaufry to
each existing thread. In the beginning of the game, there is only one thread of length zero with a single
gallimaufry $A$. In the end, this thread should be extended to a resolution of $A$.

This is the combinatorial information on a gallimaufry $A$ on a habitat $(W$, $(E_1$, $\dots$, $E_r))$ 
which is given to the player: 

\begin{itemize}
\item a simplicial complex $\Xi$ with vertices in the set
	$\{1,\dots,r\}$, consisting of all faces $\{i_1,\dots,i_k\}$ such that 
	$E_{i_1}\cap\dots E_{i_k}\cap\Sing(A)\ne\emptyset$;
\item the gallimaufry dimension $m$;
\item a generating degree of $A$;
\item the information whether $A$ is bold or not;
\item if $A$ is not bold, then the monomial factor $a:\mbox{Vertices}(\Xi)\to\Q_{\ge 0}$.
	This is just a labelling og the vertices by rational numbers;
\item the maxorder $o\in\frac{1}{b}\Z$, where $b$ is the generating degree provided as specified above.
\end{itemize}

Note that we have to distinguish the {\em empty set complex} that has no vertices and consists of one -1-complex
whose set of vertices is the empty set, and the {\em empty complex} that has no faces at all.
A gallimaufry is resolved if and only if its complex is the empty complex.

Now the player has to choose a move. There are six possible moves, two blowup moves and four
moves that create additional gallimaufries. We may distinguish two types of blowups.

\begin{description}
\item[Type I] For a gallimaufry $A$ on a habitat $(W,(E_1,\dots,E_r))$ and monomial factor
	$(a_1,\dots,a_r)$ and indices $i_1,\dots,i_k$ such that $a_{i_1}+\dots+a_{i_k}\ge 1$,
	the intersection of $E_1,\dots,E_r$ and some locally defined zoom
	is a straight subvariety contained in the singular locus.
	It is independent of the choice of the zoom because it can also be defined as the intersection of
	$E_1,\dots,E_r$ and the singular locus. A type~I blowup is a blowup of such a subvariety.
	In the winning strategy we describe here, type~I blowups are only needed when $A$ is monomial.
	However, one has to keep in mind that the blowup not only transforms $A$ but also other gallimaufries
	that are given at the same time in the game.
\item[Type II] For a bold gallimaufry $A$ of gallimaufry dimension $m$, the union of all $m$-dimensional
	components of the singular locus is a straight subvariety, by Lemma~\ref{lem:bold}. 
	A type~II blowup is a blowup at such
	a subvariety. By Lemma~\ref{lem:bold}, the transform of $A$ is not bold,
	but again, the blowup also transforms other gallimaufries that are given at this step.
\end{description}

Here is an overview on the possible moves of the player at each turn.

\begin{enumerate}
\item If a gallimaufry complex has a face with sum of labels greater than or equal to 1,
	then she may issue a blowup of type~I on that face.
\item If a gallimaufry is bold, then she may issue a blowup of type~II on that gallimaufry.
\item If a gallimaufry is tight and its complex $\Xi$ is the empty set complex,
	then she may issue a descent.
\item If a gallimaufry is not bold and has maxorder $o>0$, 
	then she may issue a tightification.
\item For some gallimaufry with complex $\Xi$ and vertex $j\in\mbox{Vertices}(\Xi)$, 
	she may issue a relaxation. This will create a gallimaufry with a smaller 
	sequence of hypersurfaces, as defined below.
\item For some gallimaufry with complex $\Xi$ and vertex $j\in\mbox{Vertices}(\Xi)$, 
	she may issue an intersection.
\end{enumerate}

A blowup move includes a unique specification of the blowup center: for type~I, the center
is the intersection of the singular locus with all hypersurfaces in the habitat sequence
corresponding to the vertices of the face occuring in the description of the move;
for type~II, it is the $m$-dimensional locus of the singular locus, where $m$ is the
gallimaufry dimension.
The consequences of a blowup move are that the habitat is blown up at the indicated center $Z$ and
all gallimaufries with $Z\subset\Sing(A)$ are transformed, so that their threads are prolongued. 
The remaining threads are removed, their threads are differentially closed.
The combinatorial data of the transformed gallimaufries are partially determined by the
properties of gallimaufries in the previous sections. 
The dimension
and generating degree are never changed. Also, the relation between two gallimaufries in two threads
(descent, quotient, intersection) are kept. The remaining data (for instance maxorder of non-tight gallimaufries)
are again given to the player.

In the remaining moves, a new gallimaufry is created and a thread is opened starting with it.
In a descent move, the new gallimaufry is given by the same Rees algebra on the same habitat, but it
is considered as a gallimaufry in dimension one less; this has an effect on the monomial factor, on
the maxorder, and on the boldness property. 
In a relaxation move with gallimaufry $A$ and vertex $j$, the new gallimaufry is defined by the same
Rees algebra $A$, but the habitat is changed: $E_j$ is replaced by $\emptyset$. 
In an intersection move with gallimaufry $A$ and vertex $j$, the new gallimaufry is $A+(\Id(E_j),1)$.

\begin{remark}
It is possible that an intersection move follows a relaxation move for the same vertex. In this situation,
we do not want to form the sum with $(\Id(\emptyset),1)$ because this is the unit singularity.
So by convention, the added summand in the intersection move is always computed from the habitat
in the main thread.
\end{remark}

\begin{lemma} \label{lem:hiro}
There is a winning strategy for monomial gallimaufries.
\end{lemma}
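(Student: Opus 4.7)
The plan is to reduce the statement to a purely combinatorial game on labeled simplicial complexes and then solve that game using a lexicographic invariant, following the classical pattern of Hironaka's monomial case. For a monomial gallimaufry $A$, Lemma~\ref{lem:howto} (with maxorder $o=0$) shows that a face $F=\{i_1,\dots,i_k\}$ lies in $\Xi$ precisely when $\bigcap_{j\in F}E_j\ne\emptyset$ and $\sum_{j\in F}a_j\ge 1$, and that a type~I blowup along such a face simply adds one new vertex $r+1$ with label $\sum_{j\in F}a_j-1$ while modifying the complex geometrically. A monomial gallimaufry is never bold as a gallimaufry in its ambient dimension, because its singular locus is contained in $E_1\cup\dots\cup E_r$ and therefore has codimension at least one; the lower-dimensional case reduces to this after passing to a zoom. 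Hence the strategy may use type~I moves only.

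The strategy I would propose is: at each turn, pick a face $F^{\star}$ of the current complex that lexicographically maximises the pair $\bigl(\sigma(F),\,|F|\bigr)$, where $\sigma(F):=\sum_{j\in F}a_j$, and issue a type~I blowup along $F^{\star}$. Writing $\mu:=\sigma(F^{\star})\ge 1$, the new exponent is $a_{r+1}=\mu-1<1$ by Lemma~\ref{lem:howto}. The face $F^{\star}$ and every face of the old complex properly containing it vanishes, since the corresponding intersections lie in the blowup centre; faces of the old complex disjoint from $F^{\star}$ persist with unchanged sum; the newly created faces through the exceptional divisor $E_{r+1}$ have the form $F''\cup\{r+1\}$ with total sum $\sigma(F'')+\mu-1$.

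To obtain termination I would use the invariant $\Phi(A)$ defined as the finite non-increasing sequence of face-sums $\sigma(F)$, $F\in\Xi$, compared lexicographically. All labels generated during the game lie in $\frac{1}{b}\Z_{\ge 0}$ for the generating degree $b$ fixed at the outset, so the poset of possible values of $\Phi$ is well-founded. The core claim is that $\Phi$ strictly decreases after each application of the strategy: the leading entry cannot grow because no face of the transformed complex has sum exceeding $\mu$, and the maximiser $F^{\star}$ itself is eliminated. The main obstacle is exactly this last verification --- tracking precisely which old faces survive and which new faces appear through the exceptional divisor, and bounding their face-sums. This is the combinatorial heart of Hironaka's classical monomial game; once the reduction above is in place, the remaining bookkeeping proceeds as in \cite{Schicho:12d}.
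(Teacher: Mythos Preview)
Your reduction to a combinatorial game on labelled complexes and your observation that only type~I moves are needed are both fine. The genuine gap is in the choice of center: you blow up a face that \emph{maximises} $(\sigma(F),|F|)$, which is the opposite of what the paper does and of what actually terminates; your invariant $\Phi$ does not decrease under this rule.

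Consider the monomial gallimaufry $(\langle xy\rangle,1)$ on the habitat $(\A^2,(\{x=0\},\{y=0\}))$, so $a_1=a_2=1$ and the unique maximal face is $F^{\star}=\{1,2\}$ with $\mu=\sigma(F^{\star})=2$. Your move blows up the origin. The new label is $a_3=\mu-1=1$; the strict transforms $E_1',E_2'$ are separated, but each meets the exceptional $E_3$, so the new complex has maximal faces $\{1,3\}$ and $\{2,3\}$, both of sum $2$. The sorted face-sum sequence passes from $(2,1,1)$ to $(2,2,1,1,1)$, which is lexicographically larger: one face of sum $\mu$ has been traded for two. Iterating your rule reproduces the same local picture forever. (With $n\ge 3$ coordinate hyperplanes and all $a_i=1$, the maximal face-sum even strictly \emph{increases} after blowing up the origin, from $n$ to $2n-2$.) So neither the leading entry of $\Phi$ nor the multiplicity of the leading entry is under control.

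The paper instead blows up a \emph{minimal} face among those with $\sigma\ge 1$. Minimality is precisely what guarantees that every face produced by the subdivision has strictly smaller sum than the old face it replaces, and this is the mechanism behind termination. Your appeal to \cite{Schicho:12d} for ``the remaining bookkeeping'' therefore does not close the gap, since that reference (like the present paper) relies on the minimal-face choice, not on yours.
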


\begin{proof}
The winning strategy is to blowup a minimal face among all faces with sum of labels greater than
or equal to 1. Then the complex of the
transformed gallimaufry is a subdivision, where the label sum of any of the new faces is strictly smaller 
than the label sum of the old face which contains that new face
topologically and which has disappeared in the subdivision. This is only possible a finite number of
times because the labels are in $\frac{1}{b}\Z_{\ge 0}$, where $b$ is the generating degree.
\end{proof}

\begin{lemma} \label{lem:redorder}
Let $m\ge 0$ be an integer. If there is a winning strategy for tight gallimaufries of dimension $m$,
then there is a winning strategy for all gallimaufries of dimension $m$.
\end{lemma}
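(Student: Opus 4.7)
The plan is induction on the maxorder $o$ of $A$. Because labels lie in $\frac{1}{b}\Z_{\ge 0}$ with $b$ the generating degree, and $o$ is bounded above by its initial value, the set of possible values of $o$ is a finite well-ordered subset of $\Q$, so strong induction on $o$ is available.

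I would handle the easy cases first. If $A$ is resolved, there is nothing to do. If $A$ is bold, I would issue a type~II blowup; by Lemma~\ref{lem:bold} the transform is non-bold, so we may henceforth assume $A$ is non-bold. If in addition $o=0$, then $A$ is monomial and Lemma~\ref{lem:hiro} supplies a winning strategy, settling the base case.

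The main step is when $A$ is non-bold with $o>0$. The plan is to issue a tightification move, opening a new thread whose initial gallimaufry is $\Tf(A)$, tight by construction, and then to play the assumed winning strategy for tight gallimaufries of dimension $m$ on that thread. Every blowup it prescribes has center $Z\subseteq\Sing(\Tf(A))\subseteq\Sing(A)$, so the main $A$-thread is also transformed; descent, relaxation and intersection moves only open further children of $\Tf(A)$ and leave the $A$-thread alone. By Lemma~\ref{lem:tt}, so long as $\Tr_Z(\Tf(A))$ is not yet resolved the maxorder of the transform of $A$ remains equal to $o$ and $\Tr_Z(\Tf(A))\sim\Tf(\Tr_Z(A))$, so the tight strategy is legitimately being played on the tightification of the current main gallimaufry throughout. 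When it eventually resolves the $\Tf$-thread, Lemma~\ref{lem:tt} forces a strict drop $o'<o$ in the maxorder of $A$'s transform. I then restart the overall strategy on this transform, and the outer induction on $o$ forces termination in finitely many turns.

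The step I expect to require the most care is that the accumulating exponent $a_{r+1}=a_{i_1}+\dots+a_{i_k}+o-1$ produced by each blowup could push the transform of $A$ into being bold, or create a face whose label sum reaches $1$, thereby enlarging $\Sing(A)$ in ways not tracked by the tight strategy operating on $\Tf(A)$. My plan to handle this is to interleave type~I blowups on any such face, and if boldness reappears to insert a single type~II blowup on $A$ (returning to the non-bold situation by Lemma~\ref{lem:bold}); each such interruption keeps the maxorder at most $o$ while strictly reducing either the number of $m$-dimensional components of $\Sing(A)$ or the combinatorics of the monomial factor, so only finitely many interruptions can occur before the tight strategy completes and the outer induction can step down in $o$.
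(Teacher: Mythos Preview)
Your first three paragraphs are correct and are essentially the paper's own proof: reduce to non-bold by a single type~II blowup, tightify, run the assumed tight strategy to resolve $\Tf(A)$, observe via Lemma~\ref{lem:tt} that the maxorder of $A$ strictly drops, and iterate down to the monomial case handled by Lemma~\ref{lem:hiro}. You even make explicit the inclusion $\Sing(\Tf(A))\subseteq\Sing(A)$ that the paper uses tacitly.

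Your fourth paragraph, however, is unnecessary and rests on a misreading. Lemma~\ref{lem:tt} already asserts that $\Tr_Z(A)$ has a monomial factor and a maxorder $o'\le o$; by the paper's conventions these quantities are only defined for non-bold gallimaufries, so the transform cannot become bold while you are blowing up inside $\Sing(\Tf(A))$. Likewise, a face of $A$'s complex acquiring label-sum $\ge 1$ does not ``enlarge'' $\Sing(A)$ in any way that matters: the tight strategy operates entirely on $\Tf(A)$ and only needs its centers to lie in $\Sing(A)$, which they do. So no interleaving is required.

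Worse, the interleaving you propose would not be harmless. A type~I blowup chosen from $A$'s monomial data has center $E_{i_1}\cap\dots\cap E_{i_k}\cap\Sing(A)$, which need not lie in $\Sing(\Tf(A))$; by the game rules that blowup would then \emph{close} the $\Tf(A)$-thread rather than transform it, forcing you to re-tightify with no guaranteed decrease in $o$. Your termination claim for these interruptions (``reduces the combinatorics of the monomial factor'') is too vague to carry weight. Simply delete the fourth paragraph and the argument is complete.
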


\begin{proof}
By a type~II blowup, we may reduce to a non-bold gallimaufry $A$. Then we have a maxorder 
$o\in\frac{1}{b}\Z_{\ge 0}$, where $b$ is the generating degree. If $o>0$, then we create the tightification
$\Tf(A)$. By assumption, there is a resolution of $\Tf(A)$. The sequence of blowups defines a thread starting
with $A$. By Lemma~\ref{lem:tt}, the last singularity $A'$ of this thread has maxorder $o'<o$.
If $o'>0$, then we start a new thread starting with $\Tf(A')$ and apply the winning strategy to the tight
gallimaufry. The resolution of the $\Tf(A')$ induces a prolongation of the thread of $A$ passing $A'$ and
ending with a singularity $A''$ of maxorder $o''<o'$. Since the maxorder can only drop finitely many times, 
we eventually achieve the monomial case $o=0$, which can be won by Lemma~\ref{lem:hiro}.
\end{proof}

\begin{lemma} \label{lem:induction}
Let $m>0$ be an integer. If there is a winning strategy for gallimaufries of
dimension $m-1$, then there is a winning strategy for tight gallimaufries of dimension $m$.
\end{lemma}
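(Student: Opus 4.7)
The plan is to reduce to the hypothesis of the lemma immediately preceding this section---that $\Sing(A)\cap E_i=\emptyset$ for every hypersurface $E_i$ of the habitat---so that $A$ descends to dimension $m-1$ and the inductive hypothesis finishes the job. I proceed by induction on the number of vertices of the complex $\Xi$ of $A$.

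\emph{Base case.} If $\Xi$ has no vertices, then the preceding lemma applies: $A$ descends to dimension $m-1$. I issue a descent move, and the inductive hypothesis supplies a winning strategy for the resulting dimension-$(m-1)$ gallimaufry, which in turn resolves $A$.

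\emph{Inductive step: removing one vertex.} Suppose $\Xi$ contains a vertex $j$. I issue an intersection move, producing $B:=A+(\Id(E_j),1)$. By the sum lemma, $\Sing(B)=\Sing(A)\cap E_j$. Moreover, $B$ is a gallimaufry of dimension $m-1$: locally, a zoom $V$ for $A$ of dimension $m$ has no component inside $E_j$, so $V\cap E_j$ is a subhabitat of $V$ of dimension $m-1$, and $B$ restricts properly to it because $(\Id(E_j),1)\subseteq B$. By the inductive hypothesis, $B$ admits a winning strategy; I play it out, obtaining a finite sequence of blowups, each with center contained in $\Sing(B)\subseteq\Sing(A)$. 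Every such blowup transforms $A$ as well, and by Lemma~\ref{lem:tt} each transform of $A$ is either tight or resolved. If $A$ becomes resolved at some stage we are done; otherwise, at the end of the subroutine $\Sing(B)$ is empty, hence $\Sing(A')\cap\tilde E_j=\emptyset$, where $A'$ denotes the final transform of $A$. Thus the vertex $j$ no longer lies in the complex of $A'$.

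\emph{Iteration and termination.} I now apply the same reduction to $A'$, and iterate. The principal obstacle is termination: the finitely many blowups used by the inductive hypothesis to resolve $B$ may create exceptional divisors $E_{r+1},\ldots,E_{r+N}$ that appear as new vertices in the complex of $A'$, so the total number of vertices need not strictly decrease in a single iteration. I control this by a well-founded descent analogous to the measure-decrease argument in the proof of Lemma~\ref{lem:redorder}. Each macro-iteration eliminates at least one vertex permanently---its strict transform never re-enters the singular locus---while the newly appearing exceptional divisors are themselves handled by further dimension-$(m-1)$ subroutines provided by the inductive hypothesis, each of which is finite. Arranging these subroutines by a suitable lexicographic order (for instance, on the pair consisting of the age of the vertex and the depth of the nested subroutine call) yields termination, and once the complex is reduced to no vertices the base case concludes.
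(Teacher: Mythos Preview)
There is a genuine gap in your inductive step. You claim that $B=A+(\Id(E_j),1)$ is a gallimaufry of dimension $m-1$ because $V\cap E_j$ serves as a zoom, but $V\cap E_j$ is contained in $E_j$ and is therefore \emph{not} a subhabitat of $(W,(E_1,\dots,E_r))$ by definition. No alternative zoom need exist: take the tight gallimaufry $A=(\langle x+y\rangle,1)$ of dimension~$2$ on the habitat $(\A^2,(x,y))$ and intersect with $E_1=\{x=0\}$ to obtain $B=(\langle x,y\rangle,1)$. A one-dimensional zoom for $B$ would have to be a straight curve through the origin not contained in either axis; but at the origin the two axes already exhaust a regular system of parameters, so no third curve can be straight with respect to both. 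Hence $B$ is not a gallimaufry of dimension~$1$, and your appeal to the inductive hypothesis fails.

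The paper's proof avoids this obstruction by first \emph{relaxing all} hypersurfaces: the relaxed gallimaufry $B$ then lives on $(W,())$, has the empty set complex, and therefore descends to dimension $m-1$ by the preceding lemma. One then resolves $B$ carefully, choosing a \emph{maximal} face $f$ of $\Xi$, intersecting the descended $B$ with every $E_i$ for $i\in f$, and resolving that dimension-$(m-1)$ gallimaufry. Maximality of $f$ guarantees that the resulting blowup centers meet no $E_l$ with $l\notin f$ and lie inside every $E_i$ with $i\in f$, so they are straight for the original habitat of $A$ even though they were produced on the relaxed one. Termination is then immediate: each pass deletes one face of $\Xi$, and the new exceptional divisors never enter $\Xi$ because they were not among the relaxed hypersurfaces. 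Your termination paragraph, by contrast, never specifies a well-founded invariant; the vague appeal to ``a suitable lexicographic order'' is exactly the bookkeeping that the relaxation/maximal-face mechanism makes unnecessary.
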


\begin{proof}
Let $A$ be a gallimaufry of dimension $m$, and let $\Xi$ be its simplicial complex. If $\Xi$ is
the empty set complex, then $A$ is tight, so it descends to dimension $m-1$. Then we can construct a resolution
by assumption (the resolution for the descent is also a resolution for $A$ itself).

In general, we create a relaxation gallimaufry $B$ on $(W,())$. Since $B$ is tight and its complex
is the empty set complex, $B$ descends to dimension $m-1$. In the following, we construct a ``careful''
resolution of $B$. The extra care is necessary to avoid blowing up centers that are not straight
for the habitat of $A$. 

Let $f$ be a maximal face of $\Xi$. We form the intersection gallimaufry $B_f$ with respect to all
vertices of $f$ and construct a resolution for $B_f$. The blowup centers are contained in the
intersection of the hypersurfaces corresponding to $f$, therefore they are also straight for the habitat
of the singularity in the thread of $A$. Then we set $A'$ and $B'$ to be the last singularities of
the threads of $A$ and $B$, and $\Xi'$ to be the complex obtained by removing the face $f$ from $\Xi$.
Again, the vertices of $\Xi'$ are the indices of hypersurfaces that have been relaxed 
in the thread of $B$, and the faces correspond to sets of hypersurfaces which have a non-empty intersection
within the singular locus in the singularity in the thread of $A$.

If $\Xi'$ is not the empty complex, then we choose a maximal face $f'$ of $\Xi'$ and repeat.
In each step, the number of faces of $\Xi$ drops.
After finitely many steps, we get the empty complex, and $A$ is resolved.
\end{proof}

\begin{remark}
There are no tight gallimaufries of dimension $m=0$. Actually, a gallimaufry of dimension~0
is either bold or resolved, hence it can be resolved in at most one step.
\end{remark}

\begin{theorem} \label{thm:win}
Every gallimaufry has a resolution.
\end{theorem}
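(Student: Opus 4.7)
The plan is a straightforward induction on the gallimaufry dimension $m$, bundling together the three lemmas that have already been proved.

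First I would set up the base case $m=0$. By the remark immediately preceding the theorem, a gallimaufry of dimension zero is either resolved or bold, and a bold gallimaufry of dimension zero has a zero-dimensional singular locus which is a straight center by Lemma~\ref{lem:bold}; a single type~II blowup then removes it. So every dimension-zero gallimaufry has a resolution in at most one step.

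Next I would carry out the induction step. Assume as induction hypothesis that every gallimaufry of dimension $m-1$ admits a winning strategy (i.e.\ a resolution). Then Lemma~\ref{lem:induction} yields a winning strategy for every tight gallimaufry of dimension $m$, and Lemma~\ref{lem:redorder} upgrades this to a winning strategy for all gallimaufries of dimension $m$. This completes the induction.

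Finally, any singularity $A$ on a habitat $(W,\ast)$ can be regarded as a gallimaufry of dimension $\dim(W)$, so by induction $A$ has a resolution. I expect no technical obstacle here at all: the entire difficulty has been absorbed into Lemmas~\ref{lem:hiro}, \ref{lem:redorder}, and~\ref{lem:induction}, and the present theorem is just the combinatorial glue that strings these three statements together via a double induction (outer on dimension, inner on maxorder inside Lemma~\ref{lem:redorder}, and further inner on the number of faces of the complex inside Lemma~\ref{lem:induction}). The only thing to double-check while writing it out is that the induction really starts at $m=0$ rather than needing a special argument for the empty-complex case, which the preceding remark confirms.
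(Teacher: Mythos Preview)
Your proposal is correct and is exactly the argument the paper gives: the theorem is stated as an ``obvious consequence of the three lemmas and the remark above,'' namely the base case $m=0$ from the remark, then Lemma~\ref{lem:induction} followed by Lemma~\ref{lem:redorder} for the inductive step. You have simply written out the induction that the paper leaves implicit.
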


\begin{proof}
This is now an obvious consequence of the three lemmas and the remark above.
\end{proof}

As a consequence of Theorem~\ref{thm:win} and Theorem~\ref{thm:reduce}, every irreducible variety
over a field of characteristic zero has a resolution.

\begin{example}
Let $A$ be the differential closure of $(\langle xy(x-y)\rangle,2)$ on the habitat $(\A^2,())$.
In order to resolve the singularity it, one would
start the game by giving the singularity to Mephisto. He would then give the following information
to Dido: {\em In thread $T_0$ (the main thread), we have dimension $2$ and generating degree $2$.
Currently, its complex is the empty set complex, the gallimaufry is not bold, the monomial
factor is obvious, and the maxorder is 1.} 

Since $T_0$ is tight, Dido will now descend $T_0$, creating a thread $T_1$ of dimension $1$
(using Lemma~\ref{lem:induction} for winning the game). Mephisto would then tell Dido that
the maxorder of $T_1$ is again 1.

Since $T_1$ is tight, Dido will descend $T_1$, creating a thread $T_2$ of dimension $0$. 
Mephisto will then tell Dido that $T_2$ is bold.

Now, Dido will issue a type~II blowup on $T_2$. The blowup at the $0$-dimensional part of the singular
locus (which in this case coincides with the whole singular locus) will resolve all threads.
The game is won and the singularity in thread $T_0$ is resolved.
\end{example}

The game described in this section was played in the Clay Summer School in Obergurgl. The participants
formed two teams, called Mephisto and Dido. Each team was working on a blackboard that was not readable
by the other team. The actual player was Dido, and it was Mephisto's task to provide the combinatorial
information by computation. The singularity was not revealed to Dido, but the team did guess it.

\begin{enumerate}
\item Mephisto was given the habitat $(\A^2,())$ and the singularity $(\langle x^2-y^3\rangle,1)$.
	After a short computation, Mephisto gave Dido a piece of paper with the following information:
	{\em In thread $T_0$ (the main thread), you have dimension $2$ and generating degree $1$.
	Currently, its complex is the empty set complex, the gallimaufry is not bold, the monomial
	factor is obvious, and the maxorder is 2.}
\item Dido decided to tightify $T_0$, creating the thread $T_1$. The dimension of $T_1$
	is 2, the monomial factor is trivial and the maxorder is 1; this is already clear.
\item Mephisto computed the tightification of $T_0$: it is $(\langle x^2-y^3\rangle,2)$. 
	The information given to Dido was: {\em $T_1$ has generating degree $2$. 
	Its complex is the empty set complex} (this could have been deduced by Dido, because the
	complex of the tightification is always a non-empty subcomplex).
\item Dido decided to descend $T_1$, creating the thread $T_2$. Its dimension is 1. The generating degree
	and the complex is inherited from $T_1$. 
\item To compute the maxorder, Mephisto restricted $T_2$ to the subvariety defined by $x$. The restriction
	is $(\langle y^3\rangle,2)$. Mephisto told Dido: {\em $T_2$ is not bold, has trivial monomial factor, 
	and maxorder $\frac{3}{2}$.}
\item Dido decided to tightify $T_2$, creating the thread $T_3$.
\item Mephisto computed the tightification $(\langle y\rangle,1)$ and told Dido: {\em the generating degree of $T_3$ 
	is 1, and the complex is the empty set complex.}
\item Dido decided to descend $T_3$, creating the thread $T_4$ of dimension~0.
\item Mephisto told Dido: {\em $T_4$ is bold.}
\item Dido demanded a type~II blowup on $T_4$.
\item Mephisto computed the blowup and transforms. In the first chart (which is the interesting one),
	the habitat is $(\A^2,(y))$, and the main singularity in the thread $T_0$ is
	$(\langle (\tilde{x}^2-y)y\rangle,1)$. The other singularities are resolved.
	Mephisto told Dido: {\em The complex in $T_0$ consist of the 0-face $\{\tt 1\}$ and the empty face. 
	The threads $T_1,T_2,T_3,T_4$ are resolved. The monomial factor of $T_0$ is $(1)$, and the
	maxorder is 1.}
\item Dido decided to tightify $T_0$, creating a thread $T_5$.
\item Mephisto computed the tightification $(\langle\tilde{x}^2-y\rangle,1)$ and told Dido: 
	{\em The complex of $T_5$ is currently the full complex of $T_0$.
	The generating degree is 2000. This is correct, it is not required to give
	the minimal generating degree to the player.}
\item Dido decided to intersect $T_5$ with vertex $\tt 1$, creating thread $T_6$.
	For an intersection move, the combinatorial data
	can be inferred, so Mephisto does not need to provide information.
\item Dido decided to relax vertex $\tt 1$ from $T_6$, creating $T_7$. Also here, no additional information
	from Mephisto is needed.
\item Dido decided to descend $T_7$, creating the thread $T_8$ of dimension~1.
\item The restricted singularity is $(\langle \tilde{x}^2\rangle,1)$.
	Mephisto told Dido: {\em Currently, the monomial factor of $T_8$ is trivial and the maxorder is 2.}
\item Dido decided to tightify $T_8$, creating a thread $T_9$.
\item For $T_8$, Mephisto gave the generating degree 2000. The complex of $T_8$ is currently the empty set
	complex (because $\tt 1$ was relaxed).
\item Dido decided to tightify $T_9$, creating the thread $T_{10}$ of dimension 0.
\item Mephisto announced that $T_{10}$ is bold.
\item Dido demanded a type~II blowup at $T_{10}$.
\end{enumerate}

The game went on for some time, but after 90 minutes the game was interrupted without a victory of Dido.
No doubt Dido would have won when given more time, because the members of the team already had a clear
strategy.

\bibliography{alles}
\bibliographystyle{plain}

\end{document}